\numberwithin{equation}{section} \theoremstyle{plain}
\theoremstyle{plain}
\newtheorem{Thm}{Theorem}
\newtheorem{Cor}[subsection]{Corollary}
\newtheorem{Lem}[subsection]{Lemma}
\newtheorem{Prop}[subsection]{Proposition}
\theoremstyle{definition}
\newtheorem{Def}[subsection]{Definition}
\theoremstyle{remark}
\newtheorem{rem}[subsection]{Remark}
\theoremstyle{example}
\newtheorem{ex}[subsection]{Example}
\newenvironment{thm}%
          { \begin{Thm}  }%
          { \end{Thm} }
\newenvironment{lem}%
          { \begin{Lem}    }%
          { \end{Lem} }
\newenvironment{cor}%
          { \begin{Cor} }%
          { \end{Cor} }
\newcommand{\loc}{\operatorname{loc}}
\newcommand{\comp}{\operatorname{comp}}
\newcommand{\mcomp}{C_{c}^{\infty}(M)}
\newcommand{\ecomp}{C_{c}^{\infty}(E)}
\newcommand{\fcomp}{C_{c}^{\infty}(F)}
\newcommand{\End}{\operatorname{End}}
\newcommand{\symd}{\widehat{D}}
\newcommand{\symdt}{\widehat{D^*}}
\newcommand{\hmax}{H_{\max}}
\newcommand{\hmin}{H_{\min}}
\newcommand{\Dom}{\operatorname{Dom}}
\newcommand{\RE}{\operatorname{Re}}
\newcommand{\Tr}{\operatorname{Tr}}
\title{Self-adjoint extensions of differential operators on Riemannian manifolds}
\author{Ognjen Milatovic, Fran{\c c}oise Truc}
\address{Department of Mathematics
and Statistics \\ University of North Florida \\ Jacksonville, FL
32224 \\ USA.}
\email{omilatov@unf.edu}
\address{Grenoble University\\ Institut Fourier\\
Unit{\'e} mixte
 de recherche CNRS-UJF 5582\\
 BP 74, 38402-Saint Martin d'H\`eres Cedex, France.}
\email{francoise.truc@ujf-grenoble.fr}
\subjclass[2000]{Primary 58J50, 35P05; Secondary 47B25}
\begin{document}
\maketitle

\begin{abstract} We study $H=D^*D+V$, where $D$ is a first order elliptic differential operator acting
on sections of a Hermitian vector bundle over a Riemannian manifold $M$, and $V$ is a Hermitian bundle endomorphism. 
In the case when $M$ is geodesically complete, we establish the essential self-adjointness of positive integer powers of $H$.
 In the case when $M$ is not necessarily geodesically complete, we give a sufficient condition for the essential self-adjointness of $H$, expressed in terms of the behavior of $V$ relative to the Cauchy boundary of $M$.
\end{abstract}
\section{Introduction}\label{S:literature}
As a fundamental problem in mathematical physics, self-adjointness of Schr\"odinger operators
has attracted the attention of researchers over many years now, resulting in numerous sufficient conditions for this property in $L^2(\mathbb{R}^{n})$. For reviews of the corresponding results, see, for instance, the books~\cite{CFKS,rs}.

The study of the corresponding problem in the context of a  non-compact Riemannian manifold was
initiated by Gaffney~\cite{ga,ga-55} with the proof of
the essential self-adjointness of the Laplacian on differential forms. About two decades later, Cordes (see Theorem 3 in ~\cite{Cordes}) proved the essential self-adjointness of positive integer powers of the operator
\begin{equation}\label{E:cordes-lap}
\Delta_{M,\mu}:= \ - \ \frac{1}{\kappa}\left(\frac{\partial}{\partial x^{i}}\left(\kappa g^{ij}\frac{\partial}{\partial x^{j}}\right)\right)
\end{equation}
on an $n$-dimensional geodesically complete Riemannian manifold $M$ equipped with a (smooth) metric $g=(g_{ij})$ (here, $(g^{ij})=((g_{ij})^{-1})$) and a positive smooth measure $d\mu$ (i.e.~in any local coordinates $x^{1},\,x^{2},\dots,x^{n}$ there exists a strictly
positive $C^{\infty}$-density $\kappa(x)$ such that $d\mu=\kappa(x)\,dx^{1}dx^{2}\dots dx^{n}$). Theorem~\ref{T:main-2} of our
paper extends this result to the operator $(D^*D+V)^{k}$ for all $k\in\mathbb{Z}_{+}$, where $D$ is a first order elliptic differential operator acting
on sections of a Hermitian vector bundle over a geodesically complete Riemannian manifold, $D^{*}$ is the formal adjoint of $D$, and $V$ is a self-adjoint Hermitian bundle endomorphism; see Section~\ref{SS:schr-op-def} for details.

In the context of a general Riemannian manifold (not necessarily geodesically complete), Cordes (see Theorem IV.1.1 in \cite{Cordes2} and Theorem 4 in \cite{Cordes}) proved the essential self-adjointness of $P^{k}$ for all $k\in\mathbb{Z}_{+}$, where
\begin{equation}\label{E:cordes-schro}
Pu:=\Delta_{M,\mu}u+qu, \qquad u\in C^{\infty}(M),
\end{equation}
and $q\in C^{\infty}(M)$ is real-valued. Thanks to a Roelcke-type estimate (see Lemma~\ref{L:roelcke} below), the technique of Cordes~\cite{Cordes2} can  be applied to the operator $(D^*D+V)^{k}$ acting
on sections of Hermitian vector bundles over a general Riemannian manifold. To make our exposition shorter, in Theorem~\ref{T:main-2} we consider the geodesically complete case. Our Theorem~\ref{T:main-4} concerns $(\nabla^*\nabla+V)^{k}$, where $\nabla$ is a metric connection on a Hermitian vector bundle over a non-compact geodesically complete Riemannian manifold. This result extends Theorem 1.1 of~\cite{Cordes3} where Cordes showed that if $(M,g)$ is non-compact and geodesically complete and $P$ is semi-bounded from below on $\mcomp$, then $P^{k}$ is essentially self-adjoint on $\mcomp$, for all $k\in\mathbb{Z}_{+}$.

For the remainder of the introduction, the notation $D^*D+V$ is used in the same sense as described earlier in this section. 
In the setting of geodesically complete Riemannian manifolds, the essential self-adjointness of $D^*D+V$ with $V\in L^{\infty}_{\loc}$ 
was established in~\cite{lm}, providing a generalization of the results in~\cite{br,ol,Oleinik94,sh1} concerning Schr\"odinger operators
 on functions (or differential forms). Subsequently, the operator $D^*D+V$ with a singular potential $V$ was considered in~\cite{B-M-S}. 
Recently, in the case $V\in L^{\infty}_{\loc}$, the authors of~\cite{B-C} extended the main result of~\cite{B-M-S} to the operator $D^*D+V$ acting on sections of infinite-dimensional bundles whose fibers are modules of finite type over a von Neumann algebra.

In the context of an incomplete Riemannian manifold, the authors of~\cite{G-M, Masamune-99, Masamune-05} studied the so-called Gaffney 
Laplacian, a self-adjoint realization of the scalar Laplacian generally different from the closure of $\Delta_{M,d \mu}|_{\mcomp}$. 
For a study of Gaffney Laplacian on differential forms, see~\cite{Masamune-07}.

Our Theorem~\ref{T:main-1} gives a condition on the behavior of $V$ relative to the Cauchy boundary of $M$ that will guarantee 
the essential self-adjointness of $D^*D+V$; for details see Section~\ref{SS:formulations} below. 
Related results can be found   in ~\cite{Brusentsev,Nen, Nen-11} in the context of (magnetic) Schr\"odinger operators
 on domains  in~${\mathbb{R}}^n$ , and in~\cite{Col-Tr} concerning the magnetic Laplacian 
on domains in ${\mathbb{R}}^n$ and certain types of Riemannian manifolds.

Finally, let us mention that Chernoff \cite{Chernoff} used
the hyperbolic equation approach to establish
the essential self-adjointness of positive integer powers of Laplace--Beltrami operator on
differential forms. This approach  was also applied in ~\cite{bandara,Chernoff77,Chumak,GK,GP,Shubin-92} to prove 
essential self-adjointness of second-order operators (acting on scalar functions or sections of Hermitian vector bundles) on Riemannian 
manifolds. Additionally, the authors of~\cite{GK,GP} used path integral techniques.

The paper is organized as follows. The main results are stated in Section~\ref{S:main},
a preliminary lemma is proven 
in Section~\ref{S:main-roelcke-1}, and the main results are proven in Sections~\ref{S:main-2-k-greater-than-1}--\ref{S:proof-main-1}.

\section{Main Results}\label{S:main}
\subsection{The setting}\label{SS:setting}
Let $M$ be an $n$-dimensional smooth, connected Riemannian manifold without boundary. We denote the Riemannian metric on $M$ by $g^{TM}$. We assume that $M$ is equipped with a positive smooth measure $d\mu$, i.e. in any local coordinates $x^{1},
\,x^{2},\dots,x^{n}$ there exists a strictly positive $C^{\infty}$-density $\kappa(x)$ such that $d\mu=\kappa(x)\,dx^{1}dx^{2}\dots dx^{n}$.
Let $E$ be a Hermitian vector bundle over $M$ and let $L^2(E)$ denote the Hilbert space of square integrable sections of $E$ with respect to the inner product
\begin{equation}\label{EE:inner}
       (u,v) \ = \
           \int_{M}\, \langle u(x),v(x)\rangle_{E_{x}}\, d\mu(x),
\end{equation}
where $\langle\cdot,\cdot\rangle_{E_{x}}$ is the fiberwise inner product. The corresponding norm in $L^2(E)$ is denoted by $\|\cdot\|$. In Sobolev space notations $W^{k,2}_{\loc}(E)$ used in this paper, the superscript $k\in\mathbb{Z}_{+}$ indicates the order of the highest derivative. The corresponding dual space is denoted by $W^{-k,2}_{\loc}(E)$.

Let $F$ be another Hermitian vector bundle on $M$. We consider a first order
differential operator $D\colon \ecomp\to\fcomp$, where $C^\infty_c$ stands for the space of
smooth compactly supported sections. In the sequel, by $\sigma(D)$ we denote the principal symbol of $D$.

\smallskip

\noindent\textbf{Assumption (A0)} Assume that $D$ is elliptic. Additionally, assume that there exists a constant $\lambda_0>0$ such that
\begin{equation}\label{E:equalst}
     |\sigma(D)(x,\xi)| \ \leq \ \lambda_0|\xi|,\qquad \textrm{for all }x\in M, \, \xi\in T_{x}^{*}M,
\end{equation}
where $|\xi|$ is the length of $\xi$ induced by the metric $g^{TM}$ and  $|\sigma(D)(x,\xi)|$ is the operator norm of $\sigma(D)(x,\xi)\colon E_x\to F_x$.

\begin{rem}\label{R:Laplacian} Assumption (A0) is satisfied if
$D=\nabla$, where $\nabla\colon C^\infty(E)\to C^{\infty}(T^*M\otimes E)$ is a covariant derivative corresponding to a metric
connection on a Hermitian vector bundle $E$ over $M$.
\end{rem}

\subsection{Schr\"odinger-type Operator}\label{SS:schr-op-def} Let $D^*\colon\fcomp\to\ecomp$ be the formal adjoint of $D$ with respect to
the inner product~(\ref{EE:inner}). We consider the operator
\begin{equation}\label{E:HV}
       H \ = \ D^*D \ + \ V,
\end{equation}
where $V\in L^{\infty}_{\loc}(\End{E})$ is a linear self-adjoint bundle
endomorphism. In other words, for all $x\in M$, the operator $V(x)\colon E_{x}\to E_{x}$ is self-adjoint and $|V(x)|\in L^{\infty}_{\loc}(M)$, where $|V(x)|$ is the norm of the operator $V(x)\colon E_{x}\to E_{x}$.

\subsection{Statements of Results}\label{SS:formulations}

\begin{thm}\label{T:main-2}   Let $M$, $g^{TM}$, and $d\mu$ be as in Section~\ref{SS:setting}. Assume that $(M,g^{TM})$ is geodesically complete. Let $E$ and $F$ be Hermitian vector bundles over $M$, and let $D\colon \ecomp\to \fcomp$ be a first order differential operator satisfying the assumption (A0).  Assume that $V\in C^{\infty}(\End{E})$ and
\[
V(x)\geq C,\qquad \textrm{for all }x\in M,
\]
where $C$ is a constant, and the inequality is understood in operator sense. Then $H^{k}$ is essentially self-adjoint on $\ecomp$, for all $k\in\mathbb{Z}_{+}$.
\end{thm}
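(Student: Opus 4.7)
My plan is to induct on $k\in\mathbb{Z}_{+}$. The base case $k=1$ is the essential self-adjointness of $H=D^*D+V$ on $\ecomp$, which under the current hypotheses ($V\in C^{\infty}(\End E)$, $V\geq C$, $M$ geodesically complete) is established in \cite{B-M-S} (and earlier in \cite{lm} for smooth $V$). Let $\overline{H}$ denote the unique self-adjoint extension; $\overline{H}\geq C$, and by the functional calculus $\overline{H}^{k}$ is self-adjoint, semibounded below, and an extension of $H^{k}|_{\ecomp}$. Consequently, for $k\geq 2$, essential self-adjointness of $H^{k}$ amounts to density in $L^{2}(E)$ of the range of $(H^{k}+\lambda)|_{\ecomp}$ for some $\lambda$ chosen large enough that $\overline{H}^{k}+\lambda\geq 1$.

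To establish this density, take $u\in L^{2}(E)$ orthogonal to the range; then $(H^{k}+\lambda)u=0$ distributionally, and interior elliptic regularity (applied to the elliptic operator $H^{k}$ of order $2k$ with smooth coefficients) gives $u\in C^{\infty}(E)$. Geodesic completeness supplies cutoffs $\chi_{R}\in\mcomp$ satisfying $0\leq\chi_{R}\leq 1$, $\chi_{R}\equiv 1$ on the geodesic ball of radius $R$ about a fixed basepoint, $\operatorname{supp}\chi_{R}$ contained in the ball of radius $2R$, and $|d\chi_{R}|\leq C_{0}/R$; the supports are compact by Hopf--Rinow. Evaluate the pairing
\[
-\lambda\,\|\chi_{R}^{k}u\|^{2}=(H^{k}u,\chi_{R}^{2k}u),
\]
which is legitimate since $u$ is smooth and $\chi_{R}^{2k}u\in\ecomp$. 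Successive integration by parts transfers factors of $H=D^{*}D+V$ off of $u$: the diagonal contribution is a nonnegative sum of squared $L^{2}$-norms of $\chi_{R}^{k}$ times various polynomials in $D$, $D^{*}$ applied to $u$, together with the $V$-induced term, while the commutator remainders take the schematic form $\int(d\chi_{R})\cdot(\text{derivatives of }u\text{ of order }\leq 2k-1)\,d\mu$ supported on the annulus $\{0<\chi_{R}<1\}$.

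The heart of the proof is to show these remainders vanish as $R\to\infty$. Iterated application of \refl{roelcke}, coupled with $(H^{k}+\lambda)u=0$, controls each intermediate derivative of $u$ on $\operatorname{supp}(d\chi_{R})$ by $\|u\|_{L^{2}}$ on a slightly larger annulus with constants independent of $R$; together with $|d\chi_{R}|\leq C_{0}/R$ and $u\in L^{2}(E)$, the remainders tend to zero. Passing to the limit in the displayed identity forces an inequality like $c\lambda\,\|u\|^{2}\leq 0$ with $c>0$, so $u\equiv 0$. The main obstacle is precisely this commutator bookkeeping: $[H^{k},\chi_{R}^{2k}]$ generates a cascade of mixed terms carrying up to $2k-1$ derivatives on $u$, and one must verify that Roelcke's inequality can be iterated to absorb every such term with the right dependence on $R$. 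That is presumably why the paper isolates \refl{roelcke} in its own section \refs{main-roelcke-1} and devotes \refs{main-2-k-greater-than-1} exclusively to the $k\geq 2$ step.
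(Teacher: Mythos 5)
Your reduction (base case $k=1$ from \cite{B-M-S}, then ``$H^{k}$ essentially self-adjoint iff the only $L^{2}$ solution of $(H^{k}+\lambda)u=0$ is $u=0$'' for suitable $\lambda$, plus smoothness of $u$ by elliptic regularity) matches the paper's setup. But the core of your argument --- the claim that ``iterated application of Lemma~\ref{L:roelcke}, coupled with $(H^{k}+\lambda)u=0$, controls each intermediate derivative of $u$ on $\operatorname{supp}(d\chi_{R})$ by $\|u\|_{L^{2}}$ on a slightly larger annulus with constants independent of $R$'' --- is precisely the hard step, and it is asserted rather than proved. Lemma~\ref{L:roelcke} does not bound derivatives of $u$ on annuli; it only bounds the \emph{integrated-in-$t$ defect} of integration by parts over the balls $B_{t}$ by $\lambda_0\int_{B_T}|d\rho|\,|Du|\,|v|\,d\mu$. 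There is no a priori reason that $Hu, H^{2}u,\dots,H^{k-1}u$ (or the mixed terms with up to $2k-1$ derivatives of $u$) lie in $L^{2}$, let alone that their mass on the annulus $\{R<\rho<2R\}$ is $O(1)$ uniformly in $R$; controlling exactly these intermediate quantities is the entire difficulty in passing from $k=1$ to general $k$. As written, the limit $R\to\infty$ in your identity cannot be justified, so the proof has a genuine gap at its central point (which you yourself flag as ``the main obstacle'').

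The paper closes this gap by a different mechanism. After using Proposition~\ref{P:abstract-power} to normalize $V\geq(\lambda_0^{2}+1)\mathrm{Id}$, it reduces to showing $H^{k}u=0$, $u\in L^{2}(E)$ implies $u=0$, and introduces the two-sided chain $f_{j}:=H^{k-j}u$ (with negative powers interpreted via $(\hmax)^{-1}$, which exists because $\hmax\geq 1$; this gives $f_{j}\in L^{2}$ for $j\geq k$ and $f_{j}=0$ for $j\leq 0$). It then forms the cumulative quantities $\alpha_{j}(T)=\lambda_0^{2}\int_{0}^{T}(f_{j},f_{j})_{B_{t}}\,dt$ and $\beta_{j}(T)=\int_{0}^{T}(Df_{j},Df_{j})_{B_{t}}\,dt$ and uses Lemma~\ref{L:roelcke} repeatedly to derive a \emph{coupled system of differential inequalities} relating $\alpha_{j}+\beta_{j}$ to square roots of right-derivatives $\alpha_{i}'\beta_{p}'$ of neighboring indices (Proposition~\ref{P:system}). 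The conclusion $u=0$ then comes from Cordes's ODE-type analysis of that system (Section IV.3 of \cite{Cordes2}), not from any uniform annulus estimate. If you want to salvage your cutoff strategy, you would need to supply a genuinely new argument for the uniform control of the commutator terms; simply invoking Lemma~\ref{L:roelcke} does not do it.
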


\begin{rem}\label{R:corollary-ch} In the case $V=0$, the following result related to Theorem~\ref{T:main-2} can be deduced from~Chernoff (see Theorem 2.2 in~\cite{Chernoff}):

\smallskip

\emph{Assume that $(M,g)$ is a geodesically complete Riemannian manifold with metric $g$. Let $D$ be as in Theorem~\ref{T:main-2}, and define \/
    \[
       c(x) \ := \ \sup\{|\sigma(D)(x,\xi)|\colon\, |\xi|_{T_{x}^{*}M}=1\}.
    \]
Fix $x_0\in M$ and define
\[
c(r):=\sup_{x\in B(x_0,r)}c(x),
\]
where $r>0$ and $B(x_0,r):=\{x\in M\colon d_{g}(x_0,x)<r\}$. Assume that
\begin{equation}\label{E:divergence-c}
\int_{0}^{\infty}\frac{1}{c(r)}\,dr=\infty.
\end{equation}
Then the operator $(D^*D)^{k}$ is essentially self-adjoint on $\ecomp$ for all $k\in\mathbb{Z}_{+}$.}

At the end of this section we give an example of an operator for which Theorem~\ref{T:main-2} guarantees the essential self-adjointness
of $(D^*D)^{k}$, whereas Chernoff's result cannot be applied.
\end{rem}

The next theorem is concerned with operators whose potential $V$ is not necessarily semi-bounded from below.

\begin{thm}\label{T:main-4} Let $M$, $g^{TM}$, and $d\mu$ be as in Section~\ref{SS:setting}. Assume that $(M,g^{TM})$ is noncompact and geodesically complete. Let $E$ be a Hermitian vector bundle over $M$ and let $\nabla$ be a Hermitian connection on $E$.
Assume that $V\in C^{\infty}(\End{E})$ and
\begin{equation}\label{E:assumption-V-below-q}
V(x)\geq q(x),\qquad\textrm{for all }x\in M,
\end{equation}
where $q\in C^{\infty}(M)$ and the inequality is understood in the sense of operators $E_x\to E_x$.
Additionally, assume that
\begin{equation}\label{E:lap-semi-bounded}
((\Delta_{M,\mu}+q)u,u)\geq C\|u\|^2,\qquad\textrm{for all }u\in\mcomp,
\end{equation}
where $C\in\mathbb{R}$ and $\Delta_{M,\mu}$ is as in~(\ref{E:cordes-lap}) with $g$ replaced by $g^{TM}$.
Then the operator $(\nabla^*\nabla+V)^{k}$ is essentially self-adjoint on $\ecomp$, for all $k\in\mathbb{Z}_{+}$.
\end{thm}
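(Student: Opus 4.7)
The plan is to reduce Theorem~\ref{T:main-4} to the proof scheme of Theorem~\ref{T:main-2}. Inspection of that proof (to be carried out in Sections~\ref{S:main-2-k-greater-than-1}--\ref{S:proof-main-1} via Lemma~\ref{L:roelcke} and a Cordes-type cutoff technique) will show that the pointwise assumption $V(x)\ge C$ enters only to secure the operator inequality $((D^{*}D+V)u,u)\ge C\|u\|^{2}$ for $u\in\ecomp$. Once the same semi-boundedness is established for $H=\nabla^{*}\nabla+V$ under the weaker hypotheses of Theorem~\ref{T:main-4}, and noting that $\nabla$ satisfies assumption (A0) by Remark~\ref{R:Laplacian}, the argument of Theorem~\ref{T:main-2} applies with $D=\nabla$ and gives essential self-adjointness of $H^{k}$ for every $k\in\mathbb{Z}_{+}$.

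To supply the missing ingredient, I would invoke a Kato-type inequality. For $u\in\ecomp$ and $\epsilon>0$ set
\[
f_{\epsilon}:=\sqrt{|u|^{2}+\epsilon^{2}}-\epsilon,
\]
which belongs to $\mcomp$ (smoothness follows from $|u|^{2}\in C^{\infty}(M)$, and the support is contained in $\operatorname{supp}u$). Since $\nabla$ is metric, $d(|u|^{2})=2\RE\langle\nabla u,u\rangle$, and the Cauchy--Schwarz inequality in $E$ yields the pointwise bound
\[
|df_{\epsilon}|^{2}=\frac{|\RE\langle\nabla u,u\rangle|^{2}}{|u|^{2}+\epsilon^{2}}\le|\nabla u|^{2};
\]
a direct expansion gives the algebraic identity $|u|^{2}-f_{\epsilon}^{2}=2\epsilon f_{\epsilon}$. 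Using $\langle Vu,u\rangle\ge q|u|^{2}$ from~(\ref{E:assumption-V-below-q}), integrating by parts, and applying hypothesis~(\ref{E:lap-semi-bounded}) to $f_{\epsilon}$, I obtain
\[
(Hu,u)\ge\int_{M}(|df_{\epsilon}|^{2}+qf_{\epsilon}^{2})\,d\mu+2\epsilon\int_{M}qf_{\epsilon}\,d\mu\ge C\|f_{\epsilon}\|^{2}+2\epsilon\int_{M}qf_{\epsilon}\,d\mu.
\]
Since $q$ is continuous on the compact set $\operatorname{supp}u$ and $\|f_{\epsilon}\|^{2}\to\|u\|^{2}$ as $\epsilon\to 0$, passing to the limit yields $(Hu,u)\ge C\|u\|^{2}$.

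The main obstacle lies precisely in this Kato-type regularization: $|u|$ itself need not be smooth at the zeros of $u$, so hypothesis~(\ref{E:lap-semi-bounded}) cannot be applied to $|u|$ directly. The smooth approximants $f_{\epsilon}$, together with the identity $|u|^{2}-f_{\epsilon}^{2}=2\epsilon f_{\epsilon}$, are what allow the vector-valued quadratic form to be dominated by the scalar Schr\"odinger form covered by the hypothesis. All remaining ingredients -- the pointwise bound $|df_{\epsilon}|^{2}\le|\nabla u|^{2}$, the Roelcke-type estimate of Lemma~\ref{L:roelcke}, and the Cordes cutoff procedure -- are available from elsewhere in the paper, making the overall reduction clean.
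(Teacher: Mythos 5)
Your Kato-type computation is fine as far as it goes: with $f_{\epsilon}=\sqrt{|u|^{2}+\epsilon^{2}}-\epsilon\in\mcomp$, the bound $|df_{\epsilon}|^{2}\le|\nabla u|^{2}$, the identity $|u|^{2}=f_{\epsilon}^{2}+2\epsilon f_{\epsilon}$, and hypothesis~(\ref{E:lap-semi-bounded}) do yield $(Hu,u)\ge C\|u\|^{2}$ for all $u\in\ecomp$. The gap is in your opening premise, namely that the pointwise hypothesis $V(x)\ge C$ in Theorem~\ref{T:main-2} ``enters only to secure'' this global quadratic-form inequality. That is not how the proof of Theorem~\ref{T:main-2} uses it. After the constant shift, the pointwise bound $V(x)\ge(\lambda_0^{2}+1)\mathrm{Id}(x)$ is invoked \emph{locally}: in~(\ref{E:ab-1}) one needs $\lambda_0^{2}(f_j,f_j)_{B_t}\le(f_j,Vf_j)_{B_t}$ on each compact ball $B_t$, applied to the sections $f_j=H^{k-j}u$, which are merely in $C^{\infty}(E)\cap L^{2}(E)$ and are \emph{not} compactly supported; the same pointwise bound is also what makes $\alpha_j'$ and $\beta_j'$ bounded for $j>k$ via $\lambda_0^{2}(f_j,f_j)+(Df_j,Df_j)\le(f_j,Hf_j)$. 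A form bound valid on $\ecomp$ cannot be transported to $f_j$ restricted to $B_t$ (truncation is not in $\ecomp$, and the form bound does not separate the $V$-term from the $\|Du\|^{2}$-term in any case). Under the hypotheses of Theorem~\ref{T:main-4}, $V$ need not be bounded below by any constant pointwise, so no shift via Proposition~\ref{P:abstract-power} repairs this, and the Cordes machinery of Section~\ref{S:main-2-k-greater-than-1} does not apply to $H$ directly.

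The paper closes exactly this gap by a ground-state (Agmon) transformation, which is where the noncompactness hypothesis is consumed: since $M$ is noncompact and complete and~(\ref{E:lap-semi-bounded}) holds, there is a smooth $\gamma>0$ with $(\Delta_{M,\mu}+q-C+1)\gamma=\gamma$; conjugating by the unitary $u\mapsto\gamma^{-1}u$ onto $L^{2}_{\mu_1}(E)$ with $d\mu_1=\gamma^{2}d\mu$ turns $H$ into $H_1=\nabla^{*,\mu_1}\nabla+\widetilde V$ with $\widetilde V=\gamma^{-1}(\Delta_{M,\mu}\gamma)\,\mathrm{Id}+V\ge(C-1)\mathrm{Id}$ \emph{pointwise}, and only then is Theorem~\ref{T:main-2} applied, to $H_1$. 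If you want to salvage your approach, you would have to either carry out this conjugation or show that the differential inequalities of Proposition~\ref{P:system} survive with $V$ bounded below only in the sense of quadratic forms, which is not established anywhere in the paper.
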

\begin{rem} Let us stress that non-compactness is required in the proof to ensure the existence
of a positive smooth solution of an equation involving $\Delta_{M,\mu}+q$. In the case of a
 compact manifold, such a solution exists under an additional assumption; see Theorem III.6.3 in~\cite{Cordes2}.
\end{rem}

In our last result we will need the notion of Cauchy boundary. Let $d_{g^{TM}}$ be the distance function corresponding to the metric $g^{TM}$.
Let $(\widehat{M}, \widehat{d}_{g^{TM}})$ be the metric completion of $(M, d_{g^{TM}})$.
We define the \emph{Cauchy boundary} $\partial_{C}M$ as follows: $\partial_{C}M:=\widehat{M}\backslash M$.
Note that $(M,d_{g^{TM}})$ is metrically complete if and only if $\partial_{C}M$ is empty.
For $x\in M$ we define
\begin{equation}\label{E:dist-boundary}
r(x):=\inf_{z\in \partial_{C}M}\widehat{d}_{g^{TM}}(x,z).
\end{equation}

 We will also need  the following assumption:

\smallskip

\noindent\textbf{Assumption (A1)}
Assume that $\widehat{M}$ is a smooth manifold and that the metric $g^{TM}$ extends to $\partial_{C}M$.

\begin{rem} Let $N$ be a (smooth) $n$-dimensional Riemannian manifold without boundary. Denote the metric on $N$ by $g^{TN}$ and assume that $(N,g^{TN})$ is geodesically complete. Let $\Sigma$ be a $k$-dimensional closed sub-manifold of $N$ with $k<n$. Then $M:=N\backslash \Sigma$ has the properties $\widehat{M}=N$ and $\partial_{C}M=\Sigma$. Thus, assumption (A1) is satisfied.
\end{rem}

\begin{thm}\label{T:main-1}  Let $M$, $g^{TM}$, and $d\mu$ be as in Section~\ref{SS:setting}. Assume that (A1) is satisfied. Let $E$ and $F$ be Hermitian vector bundles over $M$, and let $D\colon \ecomp\to \fcomp$ be a first order differential operator satisfying the assumption (A0). Assume that $V\in L^{\infty}_{\loc}(\End{E})$ and there exists a constant $C$ such that
\begin{equation}\label{E:potential-minorant}
V(x) \geq\left(\frac{\lambda_0}{r(x)}\right)^2-C,\quad \textrm{ for all }x\in M,
\end{equation}
where  $\lambda_0$ is as in~(\ref{E:equalst}), the distance $r(x)$ is as in~(\ref{E:dist-boundary}), and the inequality is understood in the sense of linear operators $E_{x}\to E_{x}$. Then $H$ is essentially self-adjoint on $\ecomp$.
\end{thm}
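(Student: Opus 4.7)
My plan is to exploit the standard deficiency criterion: since the hypothesis \eqref{E:potential-minorant} yields $V\geq -C$, the symmetric operator $H|_{\ecomp}$ is semibounded below, so essential self-adjointness is equivalent to $\ker(H^{*}+c_{0})=\{0\}$ for some fixed $c_{0}>C$. Suppose $u\in L^{2}(E)$ satisfies $(D^{*}D+V+c_{0})u=0$ in the distributional sense; local elliptic regularity (using the ellipticity of $D^{*}D$ from (A0) and $V\in L^{\infty}_{\loc}$) places $u\in W^{2,2}_{\loc}(E)$, and the equation holds in $L^{2}_{\loc}$. The target is $u\equiv 0$.

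The engine is a Green-type form estimate. For any real $\chi\in\mcomp$, pair the equation with $\chi^{2}u$; the identity $\RE\langle D^{*}Du,\chi^{2}u\rangle=\|D(\chi u)\|^{2}-\|\sigma(D)(d\chi)u\|^{2}$, combined with the symbol bound $|\sigma(D)(d\chi)|\leq\lambda_{0}|d\chi|$ from (A0) and the lower bound \eqref{E:potential-minorant} on $V$, yields
\begin{equation*}
\|D(\chi u)\|^{2}+\lambda_{0}^{2}\|\chi u/r\|^{2}+(c_{0}-C)\|\chi u\|^{2}\leq\lambda_{0}^{2}\|(d\chi)u\|^{2}. \qquad(\star)
\end{equation*}
I would then exploit $(\star)$ with a logarithmic cutoff family adapted to the Cauchy boundary. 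Assumption (A1) makes $\widehat M$ a smooth Riemannian manifold and ensures $r$ is Lipschitz with $|\nabla r|\leq 1$ a.e. For $\epsilon,L>0$ set $\chi_{\epsilon,L}(x)=F(L^{-1}\log(r(x)/\epsilon))$, where $F\colon\mathbb{R}\to[0,1]$ is smooth with $F\equiv 0$ on $(-\infty,0]$, $F\equiv 1$ on $[1,\infty)$ and $|F'|\leq 2$; after smoothing $r$ and multiplying by a cutoff at infinity if necessary, $\chi_{\epsilon,L}\in\mcomp$, it vanishes for $r\leq\epsilon$, equals $1$ for $r\geq\epsilon e^{L}$, and satisfies $|d\chi_{\epsilon,L}|\leq(2/L)(1/r)$. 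Substituting this cutoff into $(\star)$, and writing $J(a):=\int_{\{r\geq a\}}|u|^{2}/r^{2}\,d\mu$ (finite for $a>0$ by Chebyshev, since $J(a)\leq\|u\|^{2}/a^{2}$), gives the key recursion
\begin{equation*}
J(\epsilon e^{L})\leq\frac{4}{L^{2}+4}\,J(\epsilon).
\end{equation*}
Iteration along $\epsilon_{n}=\epsilon_{0}e^{-nL}$ forces $J(\epsilon_{n})\geq((L^{2}+4)/4)^{n}J(\epsilon_{0})$, and provided $J(0):=\int_{M}|u|^{2}/r^{2}\,d\mu$ is finite, the bound $J(\epsilon_{n})\leq J(0)$ compels $J(\epsilon_{0})=0$ for every $\epsilon_{0}>0$, hence $u\equiv 0$.

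The main obstacle is therefore showing $u/r\in L^{2}(E)$, i.e.\ $J(0)<\infty$. My plan here is to use the full strength of $(\star)$---retaining $\|D(\chi_{\epsilon,L}u)\|^{2}$ and $(c_{0}-C)\|\chi_{\epsilon,L}u\|^{2}$ on the left---to extract a uniform-in-$\epsilon$ bound for $\|\chi_{\epsilon,L}u/r\|^{2}$, and then invoke Fatou's lemma to promote this to $u/r\in L^{2}$. This absorption step is delicate precisely because the coefficient $\lambda_{0}^{2}$ of $1/r^{2}$ in \eqref{E:potential-minorant} \emph{exactly} matches the symbol constant $\lambda_{0}$ in (A0) appearing in the commutator error $\lambda_{0}^{2}\|(d\chi_{\epsilon,L})u\|^{2}$; it is at this point that assumption (A1) enters decisively, for it permits passage to boundary-adapted coordinates near $\partial_{C}M$ in which $r$ reduces to an ordinary distance-to-submanifold and a local Hardy-type inequality on $\widehat M\setminus\partial_{C}M$ is available. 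Once $u/r\in L^{2}$ is in hand, dominated convergence sends the right-hand side $(4\lambda_{0}^{2}/L^{2})\int_{\{\epsilon\leq r\leq\epsilon e^{L}\}}|u|^{2}/r^{2}\,d\mu$ of $(\star)$ to $0$ as $\epsilon\to 0$, and passing to the limit gives $u\equiv 0$ directly.
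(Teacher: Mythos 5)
Your reduction to showing that a weak $L^2$-solution of $(H+c_0)u=0$ vanishes, and the IMS-type inequality $(\star)$, agree with the paper (cf.\ Lemma~\ref{L:shub}, Corollary~\ref{C:nen} and the inequality~(\ref{E:bou})). The iteration $J(\epsilon e^{L})\leq \frac{4}{L^2+4}J(\epsilon)$ is also correctly derived from $(\star)$ with your logarithmic cutoff. However, the argument has a genuine gap at exactly the point you flag: everything hinges on $J(0)=\int_M |u|^2/r^2\,d\mu<\infty$, and your proposed route to this is circular. The right-hand side of $(\star)$ with the cutoff $\chi_{\epsilon,L}$ is itself of size $(4\lambda_0^2/L^2)\bigl(J(\epsilon)-J(\epsilon e^{L})\bigr)$, so no uniform-in-$\epsilon$ bound on $\|\chi_{\epsilon,L}u/r\|^2$ can be extracted from $(\star)$ without already knowing $J(0)<\infty$; and the trivial bound $J(\epsilon_n)\leq \|u\|^2\epsilon_n^{-2}$ grows like $e^{2nL}$, which always beats $((L^2+4)/4)^n$, so the iteration cannot close on its own. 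The appeal to ``a local Hardy-type inequality on $\widehat M\setminus\partial_C M$'' is not justified by (A1): that assumption only says $\widehat M$ is a smooth manifold to which $g^{TM}$ extends, not that $\partial_C M$ is a submanifold of a favorable codimension (it could be a single point, or have codimension $2$, where the Hardy inequality relative to $1/r^2$ fails), and since the coefficient of $1/r^2$ in~(\ref{E:potential-minorant}) is \emph{exactly} $\lambda_0^2$, there is no room to absorb any loss of constant in such an inequality.

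The paper's proof (Lemma~\ref{L:Hor}) avoids the need for $u/r\in L^2$ altogether by a different choice of cutoff: $f_{\varepsilon}=F_{\varepsilon}(r(x))$ with $F_{\varepsilon}(s)=s$ on $[\rho,1]$, i.e.\ a test function that vanishes \emph{linearly} at the Cauchy boundary. Then the weight coming from the potential satisfies $\max(r^{-2},1)f_{\varepsilon}^2\geq 1$ on $\{\rho\leq r\leq R\}$ while the Lipschitz constant of $f_{\varepsilon}p_{\alpha}$ tends to $1$, so both sides of the resulting inequality are comparable to $\lambda_0^2\|v\|^2$ and the strict gain is supplied by the extra term $c_1\|f_{\varepsilon}p_{\alpha}v\|^2$ (obtained by shifting $\lambda=-C-2$); letting $\alpha,\varepsilon\to 0$, $\rho\to 0$, $R\to\infty$ forces $v=0$ using only $v\in L^2$. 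If you want to salvage your scheme, you would need to first run an argument of this soft type (or otherwise prove $u/r\in L^2$); as written, the decisive step is missing.
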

In order to describe the example mentioned in Remark~\ref{R:corollary-ch}, we need the following
\begin{rem}\label{SS:Fmetric}
As explained in~\cite{B-M-S}, we can use a first-order elliptic operator $D\colon\ecomp\to \fcomp$ to define a metric on $M$.  For
$\xi,\eta\in T^*_xM$, define
\begin{equation}\label{E:trace}
       \langle \xi,\eta \rangle \ = \
       \frac{1}{m}\, \RE\, \Tr\left(\left(\sigma(D)(x,\xi)\right)^*\sigma(D)(x,\eta)\right),
       \qquad m= \dim E_x,
\end{equation}
where $\Tr$ denotes the usual trace of a linear operator.  Since $D$ is an elliptic
first-order differential operator and $\sigma(D)(x,\xi)$ is linear in $\xi$, it is easily checked that~(\ref{E:trace}) defines an inner product on $T^*_xM$.  Its dual defines a Riemannian metric on $M$. Denoting this metric by $g^{TM}$ and using elementary linear
algebra, it follows that~(\ref{E:equalst}) is satisfied with $\lambda_0=\sqrt{m}$.
\end{rem}
\begin{ex} \emph{Let $M=\mathbb{R}^{2}$ with the standard metric and measure, and $V=0$. Denoting respectively by $C_{c}^{\infty}(\mathbb{R}^{2};\mathbb{R})$
and $C_{c}^{\infty}(\mathbb{R}^{2};\mathbb{R}^2)$  the spaces of smooth compactly supported functions
$f\colon \mathbb{R}^{2}\to \mathbb{R}$ and $f\colon \mathbb{R}^{2}\to \mathbb{R}^2$, we define the  operator
   $D\colon C_{c}^{\infty}(\mathbb{R}^{2};\mathbb{R})\to C_{c}^{\infty}(\mathbb{R}^{2};\mathbb{R}^2)$ by
\[
      D \ = \ \left(\begin{array}{c}a(x,y)\frac{\partial}{\partial x}\\
                          b(x,y)\frac{\partial}{\partial y}\\
                      \end{array}\right),
\]
where
\begin{eqnarray}
     a(x,y) \ &=& \ (1-\cos(2\pi e^{x}))x^{2}+1;\notag\\
     b(x,y) \ &=& \ (1-\sin(2\pi e^{y}))y^{2}+1.\notag
\end{eqnarray}
Since $a,b$ are smooth real-valued nowhere vanishing functions in $\mathbb{R}^2$, it follows that the operator $D$ is elliptic.
We are interested in the operator
\[
      H \ := \ D^*D
      \ = \
      -\frac{\partial}{\partial x}
        \left(a^{2}\frac{\partial}{\partial x}\right)-
           \frac{\partial}{\partial y}
            \left(b^{2}\frac{\partial}{\partial y}\right).
\]
The matrix of the inner product on $T^*M$ defined by $D$ via (\ref{E:trace}) is ${\rm diag}(a^2/2,b^2/2)$.
The matrix  of the corresponding Riemannian metric $g^{TM}$ on $M$ is
${\rm diag}(2a^{-2},2b^{-2})$, so the metric itself is $ds^2=2a^{-2}dx^2+2b^{-2}dy^2$ and it
 is geodesically complete (see Example 3.1 of~\cite{B-M-S}). Moreover, thanks to Remark~\ref{SS:Fmetric}, assumption  (A0)  is
satisfied. Thus, by Theorem~\ref{T:main-2} the operator $(D^*D)^{k}$ is essentially self-adjoint for all $k\in\mathbb{Z}_{+}$.
Furthermore, in Example 3.1 of~\cite{B-M-S} it was shown that for the considered operator $D$ the condition~(\ref{E:divergence-c}) is not satisfied.
Thus, the result stated in Remark~\ref{R:corollary-ch} does not apply.}
\end{ex}

\section{Roelcke-type Inequality}\label{S:main-roelcke-1}
Let $M$, $d\mu$, $D$, and $\sigma(D)$ be as in Section~\ref{SS:setting}. Set $\widehat{D}:=-i\sigma(D)$, where $i=\sqrt{-1}$. Then for any Lipschitz function $\psi\colon M\to \mathbb{R}$ and $u\in W^{1,2}_{\loc}(E)$ we have
\begin{equation}\label{E:defsym}
       D(\psi u) \ = \ \symd(d\psi)u+\psi Du,
\end{equation}
where we have suppressed $x$ for simplicity. We also note that $\symdt(\xi)=-(\symd(\xi))^{*}$, for all $\xi\in
T_{x}^{*}M$.

For a compact set $K\subset M$, and $u,\, v\in W^{1,2}_{\loc}(E)$, we define
\begin{equation}\label{E:inner-compact}
(u,v)_{K}:=\int_{K}\langle u(x),v(x)\rangle\,d\mu(x), \qquad (Du,Dv)_{K}:=\int_{K}\langle Du(x),Dv(x)\rangle\,d\mu(x).
\end{equation}
In order to prove Theorem~\ref{T:main-2} we need the following important lemma, which  is an extension of Lemma 2.1 in \cite{Cordes2} to operator~(\ref{E:HV}). In the context of the scalar Laplacian on a Riemannian manifold, this kind of result is originally due to Roelcke~\cite{Roelcke}.

\begin{lem}\label{L:roelcke}  Let $M$, $g^{TM}$, and $d\mu$ be as in Section~\ref{SS:setting}. Let $E$ and $F$ be Hermitian vector bundles over $M$, and let $D\colon \ecomp\to \fcomp$ be a first order differential operator satisfying the assumption (A0). Let $\rho\colon M\to[0,\infty)$ be a function satisfying the following properties:
\begin{enumerate}
\item [(i)] $\rho(x)$ is Lipschitz continuous with respect to the distance induced by  the metric $g^{TM}$;
\item [(ii)] $\rho(x_0)=0$, for some fixed $x_0\in M$;
\item [(iii)] the set $B_{T}:=\{x\in M\colon \rho(x)\leq T\}$ is compact, for some $T>0$.
\end{enumerate}
Then the following inequality holds for all $u\in W^{2,2}_{\loc}(E)$ and $v\in W^{2,2}_{\loc}(E)$:
\begin{equation}\label{E:roelcke}
\int_{0}^{T} |(Du,Dv)_{B_t}-(D^*Du,v)_{B_t}|\,dt\leq \lambda_0\int_{B_{T}}|d\rho(x)||Du(x)||v(x)|\,d\mu(x),
\end{equation}
where $B_{t}$ is as in (iii)  (with $t$ instead of $T$), the constant $\lambda_0$ is as in~(\ref{E:equalst}), and $|d\rho(x)|$ is the length of $d\rho(x)\in T_{x}^*M$ induced by $g^{TM}$.
\end{lem}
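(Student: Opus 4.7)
The plan is to reduce~\eqref{E:roelcke} to a single integration-by-parts identity on $M$, using a carefully chosen Lipschitz cutoff that, through a Fubini computation, simultaneously unwinds the time integral $\int_{0}^{T}dt$ and absorbs the absolute value.

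First I would set $f(t):=(Du,Dv)_{B_t}-(D^*Du,v)_{B_t}$; since $\rho$ is Lipschitz, the level sets $\{\rho=t\}$ have $\mu$-measure zero for almost every $t$ (co-area), so $f$ is Borel measurable on $[0,T]$. Let $\epsilon(t):=\operatorname{sgn}(f(t))$, so that $\int_{0}^{T}|f(t)|\,dt=\int_{0}^{T}\epsilon(t)f(t)\,dt$, and introduce the test function
\[
\psi(x):=\int_{\rho(x)\wedge T}^{T}\epsilon(t)\,dt,
\]
which is Lipschitz (since $\rho$ is Lipschitz and $|\epsilon|\le 1$) and supported in the compact set $B_T$. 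By the Lipschitz chain rule, $d\psi=-\epsilon(\rho)\,\chi_{\{\rho<T\}}\,d\rho$ almost everywhere, so in particular $|d\psi|\le |d\rho|\,\chi_{B_T}$ a.e.

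The main computation is the integration-by-parts identity
\[
(Du,D(\psi v))=(D^*Du,\psi v),
\]
which, combined with the Leibniz rule~\eqref{E:defsym} $D(\psi v)=\widehat D(d\psi)v+\psi\,Dv$, yields
\[
\int_M \psi\bigl(\langle Du,Dv\rangle-\langle D^*Du,v\rangle\bigr)\,d\mu = -(Du,\widehat D(d\psi)v).
\]
Using the representation $\psi(x)=\int_{0}^{T}\epsilon(t)\chi_{B_t}(x)\,dt$, Fubini's theorem collapses the left-hand side to $\int_{0}^{T}\epsilon(t)f(t)\,dt=\int_{0}^{T}|f(t)|\,dt$. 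On the right-hand side, assumption~\eqref{E:equalst} gives $|\widehat D(d\psi)|\le \lambda_0|d\psi|\le \lambda_0|d\rho|\,\chi_{B_T}$, and the pointwise Cauchy--Schwarz inequality $|\langle Du,\widehat D(d\psi)v\rangle|\le|Du||\widehat D(d\psi)||v|$ yields exactly the desired bound $\lambda_0\int_{B_T}|d\rho||Du||v|\,d\mu$.

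The main obstacle is the rigorous justification of $(Du,D(\psi v))=(D^*Du,\psi v)$ under the stated regularity: $\psi$ is only Lipschitz (so $\psi v\in W^{1,2}(E)$ with compact support, but not smooth), and $u,v\in W^{2,2}_{\loc}(E)$. I would handle this by approximating $\psi v$ in the $W^{1,2}(E)$ norm by a sequence $\varphi_n\in\ecomp$; the identity $(Du,D\varphi_n)=(D^*Du,\varphi_n)$ is standard for $\varphi_n\in\ecomp$ by the definition of the formal adjoint, and one passes to the limit using $Du\in W^{1,2}_{\loc}(F)$ together with $D^*Du\in L^2_{\loc}(E)$ (which follows from $u\in W^{2,2}_{\loc}(E)$ and $D^*$ being first-order).
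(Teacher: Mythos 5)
Your proof is correct, but it takes a genuinely different route from the paper's. The paper works with a two-parameter family of cutoffs $f_{\varepsilon,t}=F_{\varepsilon,t}\circ\rho$, where $F_{\varepsilon,t}$ ramps down from $1$ to $0$ on $[t-\varepsilon,t]$: for each fixed $(\varepsilon,t)$ it applies the same integration-by-parts identity (Lemma 8.8 of \cite{B-M-S}) together with \eqref{E:defsym} and \eqref{E:equalst}, takes absolute values, integrates in $t$, and then removes $\varepsilon$ by dominated convergence (the point being that $\int_0^T|F'_{\varepsilon,t}(\rho(x))|\,dt\to 1$). You instead absorb the $t$-integration and the absolute value into a single Lipschitz test function $\psi=\int_{\rho\wedge T}^{T}\epsilon(t)\,dt$ built from the sign of $f$, and recover $\int_0^T|f(t)|\,dt$ by Fubini --- a duality trick that eliminates the $\varepsilon$-regularization entirely. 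Since both arguments rest on exactly the same two ingredients (integration by parts against a compactly supported Lipschitz cutoff, and the symbol bound via the Leibniz rule), yours is a legitimate and somewhat slicker shortcut, while the paper's cutoffs are explicit piecewise affine functions for which the chain rule and the limiting computations are completely elementary. Two small points to tighten, neither of which is a real gap: (1) the measurability of $f$ is most cleanly seen not from the coarea remark but from the fact that $t\mapsto\int_{B_t}h\,d\mu$ is, for $h\in L^1(B_T)$, a difference of monotone right-continuous functions, hence Borel; (2) your outer function $G(s)=\int_{s\wedge T}^{T}\epsilon(t)\,dt$ is Lipschitz but not piecewise $C^1$, so the exact formula $d\psi=-\epsilon(\rho)\chi_{\{\rho<T\}}\,d\rho$ requires the general Lipschitz chain rule; however, your argument only uses the inequality $|d\psi|\le|d\rho|\chi_{B_T}$ a.e., which follows directly from $G$ being $1$-Lipschitz together with the fact that $d\rho=0$ a.e.\ on the level set $\{\rho=T\}$, so the conclusion stands.
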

\begin{proof} For $\varepsilon>0$ and $t\in (0,T)$, we define a continuous piecewise linear function $F_{\varepsilon,t}$ as follows:
\[
F_{\varepsilon,t}(s)= \left\{
\begin{array}{l}
1  {\rm ~ for~}  s<t-\varepsilon   \\

(t-s)/{\varepsilon} {\rm ~ for~} t-\varepsilon\leq s <t \\
0   {\rm ~ for~ }  s \geq t
\end{array}
\right.
\]
The function  $f_{\varepsilon,t}(x):=F_{\varepsilon,t}(\rho(x))$,  is  Lipschitz continuous with respect to the distance induced
by the metric $g^{TM}$, and $d(f_{\varepsilon,t}(\rho(x)))=(F'_{\varepsilon,t}(\rho(x)))d\rho(x)$. Moreover we have
 $f_{\varepsilon,t}v\in W^{1,2}_{\loc}(E)$ for all $v\in W^{1,2}_{\loc}(E)$, since
\[
D(f_{\varepsilon,t}v)=\symd(df_{\varepsilon,t})v+f_{\varepsilon,t}Dv.
\]
 It follows from the compactness of  $B_{T}$ that $B_{t}$ is compact for all $t\in (0,T)$.  Using integration by parts (see Lemma 8.8 in~\cite{B-M-S}), for all $u\in W^{2,2}_{\loc}(E)$ and $v\in W^{2,2}_{\loc}(E)$ we have
\begin{equation}\nonumber
(D^*Du, vf_{\varepsilon,t})_{B_{t}} = (Du, D(vf_{\varepsilon,t}))_{B_{t}} = (Du,f_{\varepsilon,t}Dv)_{B_{t}} + (Du,\symd(df_{\varepsilon,t})v)_{B_{t}},
\end{equation}
which, together with~(\ref{E:equalst}), gives
\begin{align}\label{E:roelcke-integral}
&|(Du,f_{\varepsilon,t}Dv)_{B_{t}}-(D^*Du, vf_{\varepsilon,t})_{B_{t}}|=|(Du,\symd(df_{\varepsilon,t})v)_{B_{t}}|\nonumber\\
&\leq \int_{B_{t}}|Du(x)||\symd(df_{\varepsilon,t}(x))v(x)|\,d\mu(x)
\leq \lambda_0 \int_{B_{t}}|Du(x)||df_{\varepsilon,t}(x)||v(x)|\,d\mu(x)\nonumber\\
&= \lambda_0\int_{B_{t}}|Du(x)||F'_{\varepsilon,t}(\rho(x))||d\rho(x)||v(x)|\,d\mu(x)\nonumber\\
&\leq \lambda_0\int_{B_{T}}|Du(x)||F'_{\varepsilon,t}(\rho(x))||d\rho(x)||v(x)|\,d\mu(x),
\end{align}
where  $|df_{\varepsilon,t}(x)|$ and $|d\rho(x)|$ are the norms of $df_{\varepsilon,t}(x)\in T_{x}^*M$ and $d\rho(x)\in T_{x}^*M$ induced by $g^{TM}$.

Fixing $\varepsilon>0$, integrating the leftmost and the rightmost side of~(\ref{E:roelcke-integral}) from $t=0$ to $t=T$, and noting that $F'_{\varepsilon,t}(\rho(x))$ is the only term on the rightmost side depending on $t$, we obtain
\begin{align}\label{E:roelcke-integral-1}
&\int_{0}^{T}|(Du,f_{\varepsilon,t}Dv)_{B_{t}}-(D^*Du, vf_{\varepsilon,t})_{B_{t}}|\,dt\nonumber\\
&\leq \lambda_0\int_{B_{T}}|Du(x)||d\rho(x)||v(x)|I_{\varepsilon}(x)\,d\mu(x),
\end{align}
where
\[
I_{\varepsilon}(x):=\int_{0}^{T}|F'_{\varepsilon,t}(\rho(x))|\,dt.
\]
We now let $\varepsilon\to 0+$ in~(\ref{E:roelcke-integral-1}). On the left-hand side of~(\ref{E:roelcke-integral-1}), as $\varepsilon\to 0+$, we have $f_{\varepsilon,t}(x)\to\chi_{B_t}(x)$ almost everywhere, where $\chi_{B_t}(x)$ is the characteristic function of the set $B_t$. Additionally, $|f_{\varepsilon,t}(x)|\leq 1$ for all $x\in B_t$ and all $t\in (0,T)$; thus, by dominated convergence theorem, as $\varepsilon\to 0+$ the left-hand side of~(\ref{E:roelcke-integral-1}) converges to the left-hand side of~(\ref{E:roelcke}). On the right-hand side of~(\ref{E:roelcke-integral-1}) an easy calculation shows that $I_{\varepsilon}(x)\to 1$, as $\varepsilon\to 0+$. Additionally, we have  $|I_{\varepsilon}(x)|\leq 1$, a.e. on $B_{T}$; hence, by the dominated convergence theorem, as $\varepsilon\to 0+$ the right-hand side of~(\ref{E:roelcke-integral-1}) converges to the right-hand side of~(\ref{E:roelcke}). This establishes the inequality~(\ref{E:roelcke}).
\end{proof}

\section{Proof of Theorem~\ref{T:main-2}}\label{S:main-2-k-greater-than-1}

We first give the definitions of minimal and maximal operators associated with the expression $H$ in~(\ref{E:HV}).

\subsection{Minimal and Maximal Operators}\label{SS:mm-1} We define $\hmin u:=Hu$, with $\Dom(\hmin):=\ecomp$,
 and $\hmax:=(\hmin)^{*}$, where $T^*$ denotes the adjoint of operator $T$.
Denoting $\mathscr{D}_{\max}:=\{u\in L^2(E)\colon Hu\in L^2(E)\}$, we recall the following well-known property:
 $\Dom(\hmax)=\mathscr{D}_{\max}$ and $\hmax u=Hu$ for all $u\in\mathscr{D}_{\max}$.

From now on, throughout this section, we assume that the hypotheses of Theorem~\ref{T:main-2} are satisfied.
Let $x_0\in M$, and define $\rho(x):=d_{g^{TM}}(x_0,x)$, where $d_{g^{TM}}$ is the distance function corresponding to the metric $g^{TM}$.
 By the definition of $\rho(x)$ and the geodesic completeness of $(M, g^{TM})$, it follows that $\rho(x)$ satisfies all hypotheses of
 Lemma~\ref{L:roelcke}. Using Lemma~\ref{L:roelcke} and Proposition~\ref{P:abstract-power} below, we are able to apply the method of
Cordes~\cite{Cordes,Cordes2}  to our context. As we will see, Cordes's technique reduces our problem to a system of ordinary differential
 inequalities of the same type as in Section IV.3 of \cite{Cordes2}.

\begin{Prop}\label{P:abstract-power} Let $A$ be a densely defined operator with domain $\mathscr{D}$ in a Hilbert space $\mathscr{H}$.
Assume that $A$ is semi-bounded from below, that $A\mathscr{D}\subseteq \mathscr{D}$, and  that there exists $c_0\in\mathbb{R}$ such that the following two properties hold:
\begin{enumerate}
\item [(i)] $((A+c_0I)u,u)_{\mathscr{H}}\geq \|u\|_{\mathscr{H}}^2$, for all $u\in\mathscr{D}$, where $I$ denotes the identity operator in $\mathscr{H}$;

\item [(ii)] $(A+c_0I)^{k}$ is essentially self-adjoint on $\mathscr{D}$, for some $k\in\mathbb{Z}_{+}$.
\end{enumerate}
Then, $(A+cI)^{j}$ is essentially self-adjoint on $\mathscr{D}$, for all $j=1,2,\dots, k$ and all $c\in\mathbb{R}$.
\end{Prop}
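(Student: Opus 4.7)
Set $B := A + c_0 I$. Hypothesis (i) says $B$ is symmetric with $B \geq I$ on $\mathscr{D}$, the invariance $A\mathscr{D}\subseteq\mathscr{D}$ gives $B\mathscr{D}\subseteq\mathscr{D}$, and (ii) asserts that $\overline{B^k|_\mathscr{D}}$ is self-adjoint. My strategy is to realise the closure of every $B^j|_\mathscr{D}$ ($1\leq j\leq k$) as a power of one self-adjoint operator via the Friedrichs extension, and then translate by $c-c_0$ at the end.

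Let $B_F$ denote the Friedrichs extension of $B|_\mathscr{D}$: it is self-adjoint with $B_F\geq I$, and $B_Fu=Bu$ for $u\in\mathscr{D}$. Iterating the invariance $B\mathscr{D}\subseteq\mathscr{D}$ yields $B^lu=B_F^lu$ for $u\in\mathscr{D}$ and $1\leq l\leq k$, so $B^k|_\mathscr{D}\subseteq B_F^k$. Passing to closures gives $\overline{B^k|_\mathscr{D}}\subseteq B_F^k$; since both operators are self-adjoint and a self-adjoint operator admits no proper self-adjoint extension, they must coincide, i.e., $\mathscr{D}$ is a core for $B_F^k$.

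The main work lies in a core-descent lemma: \emph{if $T\geq I$ is self-adjoint and $\mathscr{D}$ is a core for $T^k$, then $\mathscr{D}$ is a core for $T^j$ for every $1\leq j\leq k$}. Given $u\in\Dom(T^j)$, I would first spectrally truncate by setting $u_n:=\chi_{[1,n]}(T)u$; then $u_n\in\Dom(T^m)$ for every $m\geq 0$, and $u_n\to u$ in the $T^j$-graph norm by dominated convergence against the spectral measure of $T$. Each $u_n$ can next be approximated in the $T^k$-graph norm by elements of $\mathscr{D}$, and, because $T\geq I$, the pointwise inequality $\lambda^{2j}\leq\lambda^{2k}$ on $[1,\infty)$ upgrades to the operator estimate $\|T^jw\|\leq\|T^kw\|$, so $T^k$-graph convergence implies $T^j$-graph convergence. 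A diagonal extraction then supplies the required approximants in $\mathscr{D}$. Applied to $T=B_F$, this gives $\overline{B^j|_\mathscr{D}}=B_F^j$, hence essential self-adjointness of $(A+c_0I)^j|_\mathscr{D}$ for each $1\leq j\leq k$.

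For arbitrary $c\in\mathbb{R}$, set $\alpha:=c-c_0$; by the first step, $(A+cI)^j$ agrees on $\mathscr{D}$ with $(B_F+\alpha I)^j$, and $\Dom((B_F+\alpha I)^j)=\Dom(B_F^j)$. Expanding
\[
(B_F+\alpha I)^ju=\sum_{l=0}^{j}\binom{j}{l}\alpha^{j-l}B_F^lu
\]
and applying the same spectral bound $\lambda^{2l}\leq\lambda^{2j}$ on $[1,\infty)$ to each lower power, any sequence $u_n\in\mathscr{D}$ converging to $u$ in the $B_F^j$-graph norm automatically satisfies $B_F^lu_n\to B_F^lu$ for all $l\leq j$, whence $(B_F+\alpha I)^ju_n\to(B_F+\alpha I)^ju$. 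Therefore $\mathscr{D}$ is a core for $(B_F+\alpha I)^j$ and $(A+cI)^j$ is essentially self-adjoint on $\mathscr{D}$. The substantive obstacle is the core-descent step: passing from density of $\mathscr{D}$ in the $k$-th graph norm to density in every lower graph norm. This relies crucially on the strict positivity $B_F\geq I$, which turns the scalar bound $\lambda^{2j}\leq\lambda^{2k}$ on $[1,\infty)$ into the graph-norm comparison $\|T^jw\|\leq\|T^kw\|$; without such a bound the diagonal approximation would break down, and everything else reduces to bookkeeping of how Friedrichs extensions interact with polynomial shifts.
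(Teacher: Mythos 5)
Your argument is correct and complete, but it is worth noting that the paper does not actually prove this proposition: Remark 4.2 simply defers to Proposition 1.4 of Cordes' book \cite{Cordes2}, observing that only abstract functional analysis and the invariance $A\mathscr{D}\subseteq\mathscr{D}$ are used there. Your proof supplies a self-contained argument in the same general spirit: pass to $B=A+c_0I$, identify $\overline{B^k|_{\mathscr{D}}}$ with $B_F^k$ for the Friedrichs extension $B_F$ via maximality of self-adjoint operators, prove core descent by spectral truncation together with the graph-norm comparison $\|T^jw\|\leq\|T^kw\|$ (valid because $T\geq I$), and handle general $c$ by a binomial expansion plus the same comparison. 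All the individual steps check out: the iterated identity $B^lu=B_F^lu$ on $\mathscr{D}$ does follow from $B\mathscr{D}\subseteq\mathscr{D}$, the "no proper self-adjoint extension" argument is sound, and the diagonal extraction in the core-descent lemma is legitimate. Two small remarks. First, you write that hypothesis (i) "says $B$ is symmetric"; strictly it does not -- symmetry of $A$ on $\mathscr{D}$ is implicit in "semi-bounded from below" (and is needed both for the Friedrichs extension to exist and for essential self-adjointness to be meaningful), so you should state that you are reading the hypothesis that way. Second, for the case $c=c_0$ there is a shorter route consistent with the tools the paper already uses: iterating (i) with the invariance gives $(B^ju,u)\geq\|u\|^2$ on $\mathscr{D}$ (exactly as in the paper's Section 4 computation), so by Theorem X.26 of \cite{rs} essential self-adjointness of $B^j|_{\mathscr{D}}$ is equivalent to density of its range, and $\operatorname{Ran}(B^j|_{\mathscr{D}})\supseteq B^j(B^{k-j}\mathscr{D})=\operatorname{Ran}(B^k|_{\mathscr{D}})$, which is dense by (ii). That shortcut does not by itself dispose of general $c$, where something like your spectral comparison is genuinely needed, so your heavier machinery is buying the full strength of the conclusion.
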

\begin{rem} To prove Proposition~\ref{P:abstract-power}, one may mimick the proof of Proposition 1.4 in \cite{Cordes2},
which was carried out for  the operator $P$ defined in~(\ref{E:cordes-schro}) with $\mathscr{D}=\mcomp$, since
 only abstract functional analysis facts and the property $P\mathscr{D}\subseteq\mathscr{D}$ were used.
\end{rem}

We start the proof of Theorem~\ref{T:main-2} by noticing  that the operator $\hmin$ is essentially self-adjoint on $\ecomp$; see Corollary 2.9 in \cite{B-M-S}.
Thanks to Proposition~\ref{P:abstract-power}, whithout any loss of generality we can change $V(x)$ to $ V(x) + C\,\textrm{Id}(x)$ ,
where $C$ is a sufficiently large constant in order to have
\begin{equation}\label{E:V-by-one}
V(x)\geq \,(\lambda_0^2+1)\textrm{Id}(x),\qquad\textrm{for all }x\in M,
\end{equation}
where $\lambda_0$ is as in~(\ref{E:equalst}) and $\textrm{Id}(x)$ is the identity endomorphism of $E_{x}$.
 Using non-negativity of $D^*D$ and~(\ref{E:V-by-one}) we have
\begin{equation}\label{E:semib-set-up-1}
(\hmin u,u)\geq\|u\|^2,\qquad\textrm{for all }u\in\ecomp,
\end{equation}
which leads to
\[
\|u\|^2\leq (Hu,u)\leq \|Hu\|\|u\|,\qquad\textrm{for all }u\in\ecomp,
\]
and, hence, $\|Hu\|\geq \|u\|$, for all $u\in\ecomp$. Therefore,
\begin{equation}\label{E:H-2-power-s}
(H^2u,u)=(Hu,Hu)=\|Hu\|^2\geq \|u\|^2,\qquad\textrm{for all }u\in\ecomp,
\end{equation}
and
\[
(H^3u,u)=(HHu,Hu)\geq \|Hu\|^2\geq \|u\|^2,\qquad\textrm{for all }u\in\ecomp.
\]
By~(\ref{E:H-2-power-s}) we have
\[
\|u\|^2\leq (H^2u,u)\leq \|H^2u\|\|u\|, \qquad\textrm{for all }u\in\ecomp,
\]
and, hence, $\|H^2u\|\geq \|u\|$, for all $u\in\ecomp$. This, in turn, leads to
\[
(H^4u,u)=(H^2u,H^2u)=\|H^2u\|^2\geq \|u\|^2,\qquad\textrm{for all }u\in\ecomp.
\]
Continuing like this, we obtain $(H^ku,u)\geq \|u\|^2$, for all $u\in\ecomp$ and all $k\in\mathbb{Z}_{+}$.
In this case, by an abstract fact (see Theorem X.26 in~\cite{rs}), the essential self-adjointness of $H^{k}$ on $\ecomp$ is equivalent to the following statement: if $u\in L^2(E)$ satisfies $H^{k}u=0$, then $u=0$.

Let $u\in L^2(E)$ satisfy $H^{k}u=0$. Since $V\in C^{\infty}(E)$, by local elliptic regularity it follows that $u\in C^{\infty}(E)\cap L^2(E)$. Define
\begin{equation}\label{E:def-f-j}
f_{j}:=H^{k-j}u,\qquad j=0,\pm 1, \pm 2, \dots
\end{equation}
Here, in the case $k-j<0$, the definition~(\ref{E:def-f-j}) is interpreted as $((\hmax)^{-1})^{j-k}$. We already noted that $\hmin$ is
essentially self-adjoint and positive. Furthermore, it is well known that the self-adjoint closure of $\hmin$ coincides with $\hmax$.
Therefore $\hmax$ is a positive self-adjoint operator, and  $(\hmax)^{-1}\colon L^2(E)\to L^2(E)$ is bounded.
This, together with $f_k=u\in L^2(E)$ explains the following property: $f_j\in L^2(E)$, for all $j\geq k$.
Additionally, observe that $f_j= 0$ for all $j\leq 0$ because $f_0= 0$. Furthermore, we note that $f_j\in C^{\infty}(E)$, for all $j\in\mathbb{Z}$.
The last assertion is obvious for $j\leq k$, and for $j>k$ it can be seen by showing that $H^{j}f_j=0$ in distributional sense and using $f_j\in L^2(E)$
together with local elliptic regularity. To see this, let $v\in \ecomp$ be arbitrary, and note that
\[
(f_j,H^{j}v)=(H^{k-j}u,H^{j}v)=(u,H^{k}v)=(H^ku,v)=0.
\]
Finally, observe that
\begin{equation}\label{E:obs-1}
H^{l}f_j=f_{j-l},\qquad\textrm{for all }j\in \mathbb{Z}\textrm{ and } l\in\mathbb{Z}_{+}\cup\{0\}.
\end{equation}

With $f_j$ as in~(\ref{E:def-f-j}), define the functions $\alpha_j$ and $\beta_j$ on the interval $0\leq T<\infty$ by the formulas
\begin{equation}\label{E:alpha-beta-j}
\alpha_j(T):=\lambda_0^2\int_{0}^{T}(f_j,f_j)_{B_{t}}\,dt,\qquad \beta_j(T):=\int_{0}^{T}(D f_j,D f_j)_{B_{t}}\,dt,
\end{equation}
where $\lambda_0$ is as in~(\ref{E:V-by-one}) and $(\cdot,\cdot)_{B_t}$ is as in~(\ref{E:inner-compact}).

In the sequel, to simplify the notations, the functions $\alpha_j(T)$ and $\beta_j(T)$, the inner products $(\cdot,\cdot)_{B_{t}}$, and the  corresponding norms
$\|\cdot\|_{B_t}$ appearing in~(\ref{E:alpha-beta-j}) will be denoted by $\alpha_j$, $\beta_j$, $(\cdot,\cdot)_{t}$,
and $\|\cdot\|_{t}$, respectively.

Note that $\alpha_j$ and $\beta_j$ are absolutely continuous on $[0,\infty)$. Furthermore, $\alpha_j$ and $\beta_j$ have a left first derivative and a right first derivative at each point. Additionally, $\alpha_j$ and $\beta_j$ are differentiable, except at (at most) countably many points. In the sequel, to simplify notations, we shall denote the right first derivatives of $\alpha_j$ and $\beta_j$ by $\alpha_j'$ and $\beta_j'$. Note that $\alpha_j$, $\beta_j$, $\alpha_j'$ and $\beta_j'$ are non-decreasing and non-negative functions. Note also that $\alpha_j$ and $\beta_j$ are convex functions. Furthermore, since $f_j=0$ for all $j\leq 0$, it follows that $\alpha_j\equiv 0$ and $\beta_j\equiv 0$ for all $j\leq 0$. Finally, using~(\ref{E:V-by-one}) and the property $f_j\in L^2(E)\cap C^{\infty}(E)$ for all $j\geq k$, observe that
\[
\lambda_0^2(f_j,f_j)+(Df_j,D f_j)\leq (Vf_j,f_j)+(D f_j,D f_j)=(f_j,Hf_j)=(f_j,f_{j-1})<\infty,
\]
for all $j>k$. Here, ``integration by parts" in the first equality is justified because $\hmin$ is essentially self-adjoint (i.e.~$\ecomp$ is an operator core of $\hmax$). Hence, $\alpha_j'$ and $\beta_j'$ are bounded for all $j>k$. It turns out that $\alpha_j$ and $\beta_j$ satisfy a system of differential inequalities,
as seen in the next proposition.

\begin{Prop}\label{P:system} Let $\alpha_j$ and $\beta_j$ be as in~(\ref{E:alpha-beta-j}). Then, for all $j\geq 1$ and all $T\geq 0$ we have
\begin{equation}\label{E:system-alpha-beta-j}
\alpha_j+\beta_j\leq \sqrt{\alpha_j'\beta_j'}+\sum_{l=0}^{\infty}\left(\sqrt{\alpha'_{j+l+1}\beta'_{j-l-1}}+\sqrt{\alpha'_{j-l-1}\beta'_{j+l+1}}\right)
\end{equation}
and
\begin{equation}\label{E:system-alpha-j}
\alpha_j\leq \lambda_0^2\left(\sum_{l=0}^{\infty}\left(\sqrt{\alpha'_{j+l+1}\beta'_{j-l}}+\sqrt{\alpha'_{j-l}\beta'_{j+l+1}}\right)\right),
\end{equation}
where $\lambda_0$ is as in~(\ref{E:V-by-one}) and $\alpha_i'$, $\beta_i'$ denote the right-hand derivatives.
\end{Prop}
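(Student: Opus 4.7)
The main device for both inequalities is a \emph{switching identity} that trades $\int_0^T (f_{j-a}, f_{j+b})_{B_t}\,dt$ for $\int_0^T (f_{j-a-1}, f_{j+b+1})_{B_t}\,dt$ up to a controlled error. To derive it, I use the relation $f_{j+b} = Hf_{j+b+1} = D^*Df_{j+b+1} + Vf_{j+b+1}$ and then invoke Lemma~\refl{roelcke} twice---once with $u = f_{j+b+1},\,v = f_{j-a}$ and once with $u = f_{j-a},\,v = f_{j+b+1}$---so as to transfer $D^*D$ all the way across the inner product. Using $|d\rho| \leq 1$ (since $\rho$ is $1$-Lipschitz) together with Cauchy--Schwarz, the two resulting error integrals are bounded by $\sqrt{\alpha'_{j-a}\beta'_{j+b+1}}$ and $\sqrt{\alpha'_{j+b+1}\beta'_{j-a}}$ respectively. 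Writing $D^*Df_{j-a} = f_{j-a-1} - Vf_{j-a}$ and exploiting the self-adjointness of $V$, the two $V$-contributions cancel, yielding
\[
\int_0^T (f_{j-a}, f_{j+b})_{B_t}\,dt = \int_0^T (f_{j-a-1}, f_{j+b+1})_{B_t}\,dt + R_{a,b},\quad |R_{a,b}| \leq \sqrt{\alpha'_{j-a}\beta'_{j+b+1}} + \sqrt{\alpha'_{j+b+1}\beta'_{j-a}}.
\]

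For~\refe{system-alpha-beta-j}, I first apply Lemma~\refl{roelcke} with $u = v = f_j$ to get $|\beta_j - \int_0^T (D^*Df_j, f_j)_{B_t}\,dt| \leq \sqrt{\alpha_j'\beta_j'}$, substitute $D^*Df_j = f_{j-1} - Vf_j$, and use~\refe{V-by-one} to absorb $\alpha_j$ into $\int_0^T (Vf_j, f_j)_{B_t}\,dt$. This yields the base estimate
\[
\alpha_j + \beta_j \leq \Bigl|\int_0^T (f_{j-1}, f_j)_{B_t}\,dt\Bigr| + \sqrt{\alpha_j'\beta_j'}.
\]
Iterating the switching identity starting at $(a,b) = (1,0)$ then rewrites $\int_0^T (f_{j-1}, f_j)_{B_t}\,dt$ as $\sum_{l=0}^{\infty} R_{l+1, l}$: the tail term $\int_0^T (f_{j-l-2}, f_{j+l+1})_{B_t}\,dt$ vanishes once $j - l - 2 \leq 0$, because $f_i \equiv 0$ for $i \leq 0$. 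Summing the bounds on the $|R_{l+1, l}|$ produces exactly the right-hand side of~\refe{system-alpha-beta-j}.

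Inequality~\refe{system-alpha-j} is even more direct: since $\alpha_j = \lambda_0^2 \int_0^T (f_j, f_j)_{B_t}\,dt$, iterating the switching identity from $(a, b) = (0, 0)$ and applying the same truncation gives $\alpha_j/\lambda_0^2 = \sum_{l \geq 0} R_{l, l}$, and $|R_{l, l}|$ is precisely the $l$th summand on the right of~\refe{system-alpha-j}. Applicability of Lemma~\refl{roelcke} is not an issue because each $f_i$ is smooth (by local elliptic regularity combined with the observations preceding Proposition~\refp{system}) and therefore lies in $W^{2,2}_{\loc}(E)$. The main obstacle I anticipate is identifying the switching identity and, above all, verifying the cancellation of the $V$-terms, which rests on the self-adjointness of $V$; once that is in place, the rest of the proof reduces to a telescoping iteration that terminates thanks to $f_i \equiv 0$ for $i \leq 0$.
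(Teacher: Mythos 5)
Your proposal is correct and follows essentially the same route as the paper: the base estimate via \refe{V-by-one} and one application of Lemma~\refl{roelcke} with $u=v=f_j$, followed by repeated use of \refl{roelcke} (your ``switching identity'', with the $V$-terms recombining by self-adjointness of $V$ and $|d\rho|\le 1$ plus Cauchy--Schwarz giving the $\sqrt{\alpha_i'\beta_p'}$ errors), terminating because $f_i\equiv 0$ for $i\le 0$. Packaging the iteration as an exact telescoping identity with controlled remainders $R_{a,b}$ is only a presentational variant of the paper's chain of triangle inequalities.
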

\begin{rem} Note that the sums in~(\ref{E:system-alpha-beta-j}) and~(\ref{E:system-alpha-j}) are finite since $\alpha_{i}\equiv 0$ and $\beta_{i}\equiv 0$ for $i\leq 0$. As our goal is to show that $f_k=u=0$, we will only use the first $k$ inequalities in (\ref{E:system-alpha-beta-j}) and the first $k$ inequalities in~(\ref{E:system-alpha-j}).
\end{rem}

\noindent\textbf{Proof of Proposition~\ref{P:system}.} From~(\ref{E:alpha-beta-j}) and~(\ref{E:V-by-one}) it follows that
\begin{equation}\label{E:ab-1}
\alpha_j+\beta_j\leq \int_{0}^{T}\left((f_j, Vf_j)_{t}+(D f_j, D f_j)_{t}\right)\,dt.
\end{equation}

We start from~(\ref{E:ab-1}), use~(\ref{E:roelcke}), Cauchy--Schwarz inequality, and~(\ref{E:obs-1}) to obtain
\begin{align*}
\alpha_j+\beta_j&\leq \int_{0}^{T}\left((f_j, Vf_j)_{t}+(D f_j, D f_j)_{t}\right)\,dt\nonumber\\
&=\int_{0}^{T}\left|(f_j, Hf_j)_{t}-(f_j, D^*D f_j)_{t}+(D f_j, D f_j)_{t}\right|\,dt\nonumber\\
&\leq \lambda_0\int_{B_{T}}|D f_j(x)||f_j(x)|\,d\mu(x)+\int_{0}^{T}|(f_j, Hf_j)_t|\,dt\nonumber\\
&\leq \sqrt{\alpha_j'\beta_j'}+ \int_{0}^{T}|(Hf_{j+1}, f_{j-1})_t|\,dt.
\end{align*}

We continue the process as follows:
\begin{align*}
\alpha_j+\beta_j&\leq \sqrt{\alpha_j'\beta_j'}+\int_{0}^{T}|(Hf_{j+1}, f_{j-1})_t|\,dt\nonumber\\
&=\sqrt{\alpha_j'\beta_j'}+\int_{0}^{T}\left|(D^*D f_{j+1}, f_{j-1})_t+(f_{j+1}, Vf_{j-1})_{t}\right|\,dt\nonumber\\
&\leq \sqrt{\alpha_j'\beta_j'}+\int_{0}^{T}\left|(D^*D f_{j+1}, f_{j-1})_t-(D f_{j+1}, D f_{j-1})_{t}\right|\,dt\nonumber\\
&+\int_{0}^{T}\left|(D f_{j+1}, D f_{j-1})_{t}-(f_{j+1},D^*D f_{j-1})_{t}\right|\,dt +\int_{0}^{T}|(f_{j+1}, Hf_{j-1})_{t}|\,dt\nonumber\\
&\leq \sqrt{\alpha_j'\beta_j'} +\sqrt{\alpha_{j+1}'\beta_{j-1}'} +\sqrt{\alpha_{j-1}'\beta_{j+1}'}+\int_{0}^{T}|(Hf_{j+2}, f_{j-2})_{t}|\,dt,
\end{align*}
where we used triangle inequality,~(\ref{E:roelcke}), Cauchy--Schwarz inequality, and~(\ref{E:obs-1}).
We continue like this until the last term reaches the subscript $j-l\leq 0$, which makes the last term equal zero by properties of $f_{i}$ discussed above. This establishes~(\ref{E:system-alpha-beta-j}).

To show~(\ref{E:system-alpha-j}), we start from the definition of $\alpha_j$, use~(\ref{E:roelcke}), Cauchy--Schwarz inequality, and~(\ref{E:obs-1}) to obtain
\begin{align*}
\alpha_j&= \lambda_0^2\int_{0}^{T}(f_j,f_j)_{t}\,dt=\lambda_0^2\int_{0}^{T}|(f_j,Hf_{j+1})_{t}|\,dt\nonumber\\
&=\lambda_0^2\int_{0}^{T}|(f_j,D^*D f_{j+1})_{t}+(Vf_j,f_{j+1})_{t}|\,dt\nonumber\\
&\leq \lambda_0^2\int_{0}^{T}|(f_j,D^*D f_{j+1})_{t}-(D f_j,D f_{j+1})_{t}|\,dt\nonumber\\
&+\lambda_0^2\int_{0}^{T}|(D f_j,D f_{j+1})_{t}-(D^*D f_j,f_{j+1})_{t}|\,dt+\lambda_0^2\int_{0}^{T}|(Hf_j,f_{j+1})_{t}|\,dt\nonumber\\
&\leq \lambda_0^2\left(\sqrt{\alpha_{j+1}'\beta_{j}'} +\sqrt{\alpha_{j}'\beta_{j+1}'}\right)+\lambda_0^2\int_{0}^{T}|(f_{j-1},f_{j+1})_{t}|\,dt.
\end{align*}
We continue like this until the last term reaches the subscript $j-l\leq 0$, which makes the last term equal zero by properties of $f_{i}$ discussed above. This establishes~(\ref{E:system-alpha-j}). $\hfill\square$

\medskip

\noindent\textbf{End of the proof of Theorem~\ref{T:main-2}.} We will now transform the system~(\ref{E:system-alpha-beta-j})--(\ref{E:system-alpha-j}) by introducing new variables:
\begin{equation}\label{E:omega-j-theta-j}
\omega_j(T):=\alpha_j(T)+\beta_j(T),\qquad \theta_j(T):=\alpha_j(T)-\beta_j(T)\qquad T\in[0,\infty).
\end{equation}
To carry out the transformation, observe that Cauchy--Schwarz inequality applied to vectors $\langle\sqrt{\alpha'_i}, \sqrt{\beta'_i}\rangle$ and $\langle\sqrt{\beta'_p},\sqrt{\alpha'_p}\rangle$ in $\mathbb{R}^2$ gives
\[
\sqrt{\alpha_i'\beta_p'}+\sqrt{\alpha_p'\beta_i'}\leq \sqrt{\omega_i'\omega_p'},
\]
which, together with (\ref{E:system-alpha-beta-j})--(\ref{E:system-alpha-j}) leads to
\begin{equation}\label{E:system-omega-theta-j}
\omega_j\leq \frac{1}{2}\sqrt{(\omega_j')^2-(\theta_j')^2}+\sum_{l=0}^{\infty}\sqrt{\omega'_{j+l+1}\omega'_{j-l-1}}
\end{equation}
and
\begin{equation}\label{E:system-omega-j}
\frac{1}{2}(\omega_j+\theta_j)\leq \lambda_0^2\left(\sum_{l=0}^{\infty}\sqrt{\omega'_{j+l+1}\omega'_{j-l}}\right),
\end{equation}
where $\lambda_0$ is as in~(\ref{E:V-by-one}) and $\omega_i'$, $\theta_i'$ denote the right-hand derivatives.

The functions $\omega_j$ and $\theta_j$ satisfy the following properties: (i) $\omega_j$ and $\theta_j$ are absolutely continuous on $[0,\infty)$, and the right-hand derivatives $\omega_j'$ and $\theta_j'$ exist everywhere; (ii) $\omega_j$  and $\omega_j'$ are non-negative and non-increasing; (iii) $\omega_j$ is convex; (iv) $\omega_{j}'$ is bounded for all $j\geq k$; (v) $\omega_j(0)=\theta_j(0)=0$; and (vi) $|\theta_j(T)|\leq\omega_j(T)$ and $|\theta_j'(T)|\leq\omega_j'(T)$ for all $T\in[0,\infty)$.

In Section IV.3 of~\cite{Cordes2} it was shown that if $\omega_j$ and $\theta_j$ are functions satisfying the above described properties (i)--(vi) and the system (\ref{E:system-omega-theta-j})--(\ref{E:system-omega-j}), then $\omega_{j}\equiv 0$  for all $j=1,2,\dots, k$. In particular, we have $\omega_k(T)=0$, for all $T\in[0,\infty)$, and hence $f_k=0$. Going back to~(\ref{E:def-f-j}), we get $u=0$, and this concludes the proof of essential self-adjointness of $H^{k}$ on $\ecomp$. The essential self-adjointness of $H^2$, $H^{3}$, $\dots$, and $H^{k-1}$ on $\ecomp$ follows by Proposition~\ref{P:abstract-power}.
$\hfill\square$

\section{Proof of Theorem~\ref{T:main-4}} We adapt the proof of Theorem 1.1 in~\cite{Cordes3} to our type of operator. By assumption~(\ref{E:lap-semi-bounded}) it follows that
\begin{equation}\label{E:lap-semi-bounded-1}
((\Delta_{M,\mu}+q-C+1)u,u)\geq \|u\|^2,\qquad\textrm{for all }u\in\mcomp.
\end{equation}
Since~(\ref{E:lap-semi-bounded-1}) is satisfied and since $M$ is non-compact and $g^{TM}$ is geodesically complete, a result of Agmon~\cite{Agmon-83} (see also Proposition III.6.2 in~\cite{Cordes2})  guarantees the existence of a function $\gamma\in C^{\infty}(M)$ such that $\gamma(x)>0$ for all $x\in M$, and
\begin{equation}\label{E:gamma-solution}
(\Delta_{M,\mu}+q-C+1)\gamma=\gamma.
\end{equation}
We now use the function $\gamma$ to transform the operator $H=\nabla^*\nabla+V$. Let $L_{\mu_1}^2(E)$ be the space of square integrable sections of $E$ with inner product $(\cdot,\cdot)_{\mu_1}$ as in~(\ref{EE:inner}), where $d\mu$ is replaced by $d\mu_1:=\gamma^2 d\mu$. For clarity, we denote $L^2(E)$ from Section~\ref{SS:setting} by $L_{\mu}^2(E)$.  In what follows, the formal adjoints of $\nabla$ with respect to inner products $(\cdot,\cdot)_{\mu}$ and $(\cdot,\cdot)_{\mu_1}$ will be denoted by $\nabla^{*,\mu}$ and $\nabla^{*,\mu_1}$, respectively. It is easy to check that the map $T_{\gamma}\colon L_{\mu}^2(E)\to L_{\mu_1}^2(E)$ defined by $Tu:=\gamma^{-1} u$ is unitary. Furthermore, under the change of variables $u\mapsto\gamma^{-1} u$, the differential expression $H=\nabla^{*,\mu}\nabla+V$ gets transformed into $H_1:=\gamma^{-1}H\gamma$. Since $T$ is unitary, the essential self-adjointness of $H^k|_{\ecomp}$ in $L_{\mu}^2(E)$ is equivalent to essential self-adjointness of $(H_{1})^k|_{\ecomp}$ in $L_{\mu_1}^2(E)$.

In the sequel, we will show that $H_1$ has the following form:
\begin{equation}\label{E:op-H-1}
H_1=\nabla^{*,\mu_1}\nabla+\widetilde{V},
\end{equation}
with
\[
\widetilde{V}(x):=\frac{\Delta_{M,\mu}\gamma}{\gamma}\,\textrm{Id}(x)+V(x).
\]
To see this, let $w,\,z\in\ecomp$ and consider
\begin{align}\label{E:t-1}
&(H_1w,z)_{\mu_1}=\int_{M}\langle\gamma^{-1}H(\gamma w) ,z\rangle\,\gamma^2d\mu= \int_{M}\langle H(\gamma w) ,\gamma z\rangle\,d\mu=(H(\gamma w) ,\gamma z)_{\mu}\nonumber\\
&=(\nabla(\gamma w),\nabla (\gamma z))_{\mu}+(V\gamma w, \gamma z)_{\mu}=(\gamma^2\nabla w,\nabla z)_{\mu}+(d\gamma\otimes w,d\gamma\otimes z)_{L^2_{\mu}(T^*M\otimes E)}\nonumber\\
&+(\gamma\nabla w,d\gamma\otimes z)_{L^2_{\mu}(T^*M\otimes E)}+(d\gamma\otimes w,\gamma \nabla z)_{L^2_{\mu}(T^*M\otimes E)}+(V\gamma w, \gamma z)_{\mu}.
\end{align}
Setting $\xi:=d(\gamma^2/2)\in T^*M$ and using equation (1.34) in Appendix C of~\cite{Taylor} we have
\begin{align}\label{E:t-2}
&(\gamma\nabla w,d\gamma\otimes z)_{L^2_{\mu}(T^*M\otimes E)}=
(\nabla w, \xi \otimes z)_{L^2_{\mu}(T^*M\otimes E)}=(\nabla_{X} w, z)_{\mu},
\end{align}
where $X$ is the vector field associated with  $\xi\in T^*M$ via the metric $g^{TM}$.

Furthermore, by equation (1.35) in Appendix C of~\cite{Taylor} we have
\begin{align}\label{E:t-3}
&(d\gamma\otimes w,\gamma \nabla z)_{L^2_{\mu}(T^*M\otimes E)}=(\xi\otimes w,\nabla z)_{L^2_{\mu}(T^*M\otimes E)}
=(\nabla^{*,\mu}(\xi\otimes w),z)_{\mu}\nonumber\\
&=-(\operatorname{div}_{\mu}(X)w,z)_{\mu}-(\nabla_{X}w,z)_{\mu},
\end{align}
where, in local coordinates $x^{1},\,x^{2},\dots,x^{n}$, for $X=X^j\frac{\partial}{\partial x^{j}}$, with Einstein summation convention,
\[
\operatorname{div}_{\mu}(X):=\frac{1}{\kappa}\left(\frac{\partial}{\partial x^{j}}\left(\kappa X^{j}\right)\right).
\]
(Recall that $d\mu=\kappa(x)\,dx^{1}dx^{2}\dots dx^{n}$, where $\kappa(x)$ is a positive $C^{\infty}$-density.)
Since $X^{j}=(g^{TM})^{jl}\left(\gamma\frac{\partial\gamma}{\partial x^{l}}\right)$, we have
\begin{align}\label{E:t-4}
\operatorname{div}_{\mu}(X)=|d\gamma|^2-\gamma(\Delta_{M,\mu}\gamma),
\end{align}
where $|d\gamma(x)|$ is the norm of $d\gamma(x)\in T_{x}^*M$ induced by $g^{TM}$,
and $\Delta_{M,\mu}$ is as in~(\ref{E:cordes-lap}) with metric $g^{TM}$.
Combining~(\ref{E:t-1})--(\ref{E:t-4})  and noting that
\[
(d\gamma\otimes w,d\gamma\otimes z)_{L^2_{\mu}(T^*M\otimes E)}=\int_{M}|d\gamma|^2\langle w, z\rangle\,d\mu,
\]
we obtain
\begin{align}
&(H_1w,z)_{\mu_1}=\int_{M}\langle \nabla w,\nabla z\rangle \gamma^2\,d\mu+\int_{M}\langle V w, z\rangle \gamma^2\,d\mu+\int_{M}\gamma(\Delta_{M,\mu}\gamma)\langle w,z\rangle\,d\mu\nonumber\\
&=(\nabla w,\nabla z)_{L^2_{\mu_1}(T^*M\otimes E)}+(Vw,z)_{\mu_1}+(\gamma^{-1}(\Delta_{M,\mu}\gamma)w,z)_{\mu_1}\nonumber\\
&=(\nabla^{*,\mu_1}\nabla w, z)_{\mu_1}+(Vw,z)_{\mu_1}+(\gamma^{-1}(\Delta_{M,\mu}\gamma)w,z)_{\mu_1},
\end{align}
which shows~(\ref{E:op-H-1}).

By~(\ref{E:assumption-V-below-q}) and~(\ref{E:gamma-solution}) it follows that
\[
\widetilde{V}(x)=\frac{\Delta_{M,\mu}\gamma}{\gamma}\,\textrm{Id}(x)+V(x)\geq \,(C-1)\textrm{Id}(x),\qquad\textrm{for all }x\in M,
\]
where $C$ is as in~(\ref{E:lap-semi-bounded}).
Thus, by Theorem~\ref{T:main-2} the operator $(H_1)^{k}|_{\ecomp}$ is essentially self-adjoint in $L^2_{\mu_1}(E)$ for all $k\in\mathbb{Z}_{+}$.
$\hfill\square$

\section{Proof of Theorem~\ref{T:main-1}}\label{S:proof-main-1}
Throughout the section, we assume that the hypotheses of Theorem~\ref{T:main-1} are satisfied. In subsequent discussion, the notation $\widehat{D}$ is as in~(\ref{E:defsym}) and the operators $\hmin$ and $\hmax$ are as in Section~\ref{SS:mm-1}. We begin with the following lemma, whose proof is a direct consequence of the definition of $\hmax$ and local elliptic regularity.
\begin{lem}\label{L:domain-w-2-2} Under the assumption $V\in L^{\infty}_{\loc}(\End E)$,  we have the following inclusion:\\ $\Dom(\hmax)\subset W^{2,2}_{\loc}(E)$.
\end{lem}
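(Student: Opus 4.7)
The plan is to unfold the definition of $\hmax$ and then invoke local elliptic regularity for the second-order elliptic operator $D^*D$.

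First, I would take an arbitrary $u\in \Dom(\hmax)$. By the characterization of the maximal domain recalled in Section~\ref{SS:mm-1}, we have $u\in L^2(E)$ and $Hu=D^*Du+Vu\in L^2(E)$, where the identity is read in the distributional sense on $M$. Next, since $V\in L^{\infty}_{\loc}(\End E)$ and $u\in L^2(E)\subset L^2_{\loc}(E)$, the pointwise product $Vu$ belongs to $L^2_{\loc}(E)$. Subtracting, we obtain
\[
D^*Du = Hu - Vu \in L^2_{\loc}(E)
\]
as a distributional equation on $M$.

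The remaining step is to note that, by assumption (A0), the first-order operator $D$ is elliptic, so its principal symbol $\sigma(D)(x,\xi)\colon E_x\to F_x$ is injective for $\xi\neq 0$. Consequently $D^*D$ is a second-order differential operator on sections of $E$ whose principal symbol at $(x,\xi)$ is $\sigma(D)(x,\xi)^*\sigma(D)(x,\xi)\in\End(E_x)$, positive definite for $\xi\neq 0$; thus $D^*D$ is elliptic of order two. Working in local coordinate charts over which $E$ is trivialized, standard interior $L^2$-regularity for elliptic systems upgrades the information $u\in L^2_{\loc}(E)$ and $D^*Du\in L^2_{\loc}(E)$ to $u\in W^{2,2}_{\loc}(E)$, which is the desired inclusion.

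There is no real obstacle here: the only thing one must be slightly careful with is the bundle-valued version of elliptic regularity, but this is entirely standard once $D^*D$ is identified as a second-order elliptic operator, and the argument requires nothing more than the hypothesis $V\in L^{\infty}_{\loc}(\End E)$ to absorb the zeroth-order term into the right-hand side.
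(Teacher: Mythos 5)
Your argument is correct and is exactly the one the paper intends: the authors state that the lemma ``is a direct consequence of the definition of $\hmax$ and local elliptic regularity,'' and your write-up simply fills in the details of that route (absorbing $Vu$ into the right-hand side using $V\in L^{\infty}_{\loc}(\End E)$ and applying interior $L^2$-regularity for the second-order elliptic operator $D^*D$). No issues.
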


The proof of the next lemma is given in Lemma 8.10 of \cite{B-M-S}.
\begin{lem}\label{L:shub} For any $u\in \Dom(H_{\max})$ and any Lipschitz function
with compact support $\psi\colon M\to\mathbb{R}$, we have:
\begin{equation}\label{E:shub}
   (D(\psi u),D(\psi u)) \ + \ (V\psi u, \psi u) \ = \ \RE(\psi Hu,\psi u)
          \ + \ \|\widehat{D}(d\psi)u\|^2.
\end{equation}
\end{lem}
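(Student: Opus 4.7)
The plan is to expand the squared norm on the left of \eqref{E:shub} via the product rule \eqref{E:defsym}, use a single integration by parts to replace $\|\psi Du\|^{2}$ by a term involving $D^{*}Du$, observe that the two cross terms involving $\symd(d\psi)$ cancel exactly, and then reassemble $Hu=D^{*}Du+Vu$ by adding $(V\psi u,\psi u)$ to both sides. The regularity needed is provided by Lemma~\ref{L:domain-w-2-2} ($u\in W^{2,2}_{\loc}(E)$); the integration by parts is legitimate because $\psi$ has compact support, and is packaged by Lemma~8.8 of \cite{B-M-S}.

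Concretely, by \eqref{E:defsym} I would first write $D(\psi u)=\symd(d\psi)u+\psi Du$, so
\[
\|D(\psi u)\|^{2}=\|\symd(d\psi)u\|^{2}+2\RE(\symd(d\psi)u,\psi Du)+\|\psi Du\|^{2}.
\]
Next I would apply \eqref{E:defsym} again with $\psi$ replaced by $\psi^{2}$, yielding $\psi^{2}Du=D(\psi^{2}u)-2\psi\,\symd(d\psi)u$. Since $\psi^{2}u\in W^{2,2}(E)$ has compact support and $u\in W^{2,2}_{\loc}(E)$, Lemma~8.8 of \cite{B-M-S} gives $(D(\psi^{2}u),Du)=(\psi^{2}u,D^{*}Du)$. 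Because $\|\psi Du\|^{2}$ is real and $\psi$ is real-valued, taking real parts produces
\[
\|\psi Du\|^{2}=\RE(\psi^{2}D^{*}Du,u)-2\RE(\symd(d\psi)u,\psi Du),
\]
which, substituted into the expansion above, collapses the cross terms and yields
\[
\|D(\psi u)\|^{2}=\|\symd(d\psi)u\|^{2}+\RE(\psi^{2}D^{*}Du,u).
\]

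Finally, the pointwise self-adjointness of $V(x)$ gives $(V\psi u,\psi u)=\RE(\psi^{2}Vu,u)$; adding this to both sides recombines $D^{*}D$ and $V$ into $H$, and the real-valuedness of $\psi$ gives $\RE(\psi^{2}Hu,u)=\RE(\psi Hu,\psi u)$, which is exactly \eqref{E:shub}. The only nontrivial step is the single integration by parts at $W^{2,2}_{\loc}$ regularity, which is handled cleanly by Lemma~8.8 of \cite{B-M-S}; everything else is algebraic, driven by \eqref{E:defsym} and the self-adjointness of $V$.
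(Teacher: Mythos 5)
Your argument is correct and is essentially the paper's own proof: the paper simply defers to Lemma 8.10 of \cite{B-M-S}, which is established by the same computation --- expand $D(\psi u)$ via \eqref{E:defsym}, perform one integration by parts (Lemma 8.8 of \cite{B-M-S}), cancel the cross terms, and take real parts. The only slip is cosmetic: since $\psi$ is merely Lipschitz, $\psi^{2}u$ lies in $W^{1,2}_{\comp}(E)$ rather than $W^{2,2}(E)$, but that is exactly the regularity the cited Green's formula needs when paired against $Du\in W^{1,2}_{\loc}(F)$, given that $u\in W^{2,2}_{\loc}(E)$ by Lemma~\ref{L:domain-w-2-2} so that $D^{*}Du\in L^{2}_{\loc}(E)$.
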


\begin{cor}\label{C:nen}
Let $H$ be as in~(\ref{E:HV}), let $u\in L^2(E)$ be a weak solution of $Hu=0$, and let $\psi\colon M\to\mathbb{R}$ be a Lipschitz function with compact support.
Then
\begin{equation}\label{E:ute}
(\psi u, \,H (\psi u)) = \|\widehat{D}(d\psi)u\|^2,
\end{equation}
where $(\cdot,\cdot)$ on the left-hand side denotes the duality between $W_{\loc}^{1,2}(E)$ and $W^{-1,2}_{\comp}(E)$.
\end{cor}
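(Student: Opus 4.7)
The plan is to apply \refl{shub} to $u$ and $\psi$, use the hypothesis $Hu=0$ to kill the cross term $\RE(\psi Hu,\psi u)$ in the resulting identity, and then recognize the surviving left-hand side as the duality pairing $(\psi u, H(\psi u))$ asserted in \refe{ute}. No new analytical input is needed; the proof is essentially a regularity-and-bookkeeping exercise built on \refl{shub}.

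To invoke \refl{shub} I first need $u\in\Dom(\hmax)$, which is immediate from $u\in L^2(E)$ together with $Hu=0\in L^2(E)$. Combining this with \refl{domain-w-2-2} yields $u\in W^{2,2}_{\loc}(E)$, so $\psi u\in W^{2,2}_{\comp}(E)$ and $D(\psi u)\in W^{1,2}_{\comp}(F)$; in particular every term appearing in \refe{shub} is an ordinary $L^2$ inner product. Substituting $Hu=0$ into \refe{shub} then collapses it to
\[
(D(\psi u),D(\psi u))+(V\psi u,\psi u)=\|\symd(d\psi)u\|^2.
\]

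The last step is to identify the left-hand side above with $(\psi u, H(\psi u))$ under the duality between $W^{1,2}_{\loc}(E)$ and $W^{-1,2}_{\comp}(E)$. Because $\psi u\in W^{2,2}_{\comp}(E)$ and $V\in L^{\infty}_{\loc}(\End E)$, the element $H(\psi u)=D^*D(\psi u)+V(\psi u)$ lies in $L^2_{\comp}(E)\subset W^{-1,2}_{\comp}(E)$, and the distributional definition of $D^*$ applied to the test section $\psi u\in W^{1,2}_{\loc}(E)$ gives $\langle\psi u, D^*D(\psi u)\rangle=(D(\psi u),D(\psi u))$. Adding the zeroth-order pairing $(\psi u,V\psi u)=(V\psi u,\psi u)$ reassembles the left-hand side as $(\psi u, H(\psi u))$, which is \refe{ute}. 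The only real subtlety is the regularity required both to apply \refl{shub} and to carry out this last identification, and this is exactly what \refl{domain-w-2-2} supplies once we know $Hu=0\in L^2(E)$.
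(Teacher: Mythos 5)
Your proof is correct and follows essentially the same route as the paper: apply Lemma~\ref{L:shub}, use $Hu=0$ to kill the term $\RE(\psi Hu,\psi u)$, and identify the remaining left-hand side with the duality pairing $(\psi u, H(\psi u))$ via integration by parts, with Lemma~\ref{L:domain-w-2-2} supplying the needed regularity. One small inaccuracy: since $\psi$ is only Lipschitz, $\psi u$ lies in $W^{1,2}_{\comp}(E)$ but not in general in $W^{2,2}_{\comp}(E)$, so $H(\psi u)$ lands in $W^{-1,2}_{\comp}(E)$ rather than $L^2_{\comp}(E)$ --- this is exactly the regularity the paper uses, and it is all your argument actually needs.
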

\begin{proof} Since $u\in L^2(E)$ and $Hu=0$, we have $u\in\Dom(\hmax)\subset W^{2,2}_{\loc}(E)\subset W^{1,2}_{\loc}(E)$, where the first inclusion follows by Lemma~\ref{L:domain-w-2-2}. Since $\psi$ is a Lipschitz compactly supported function, we get $\psi u\in W^{1,2}_{\comp}(E)$ and, hence, $H(\psi u)\in W^{-1,2}_{\comp}(E)$.  Now the equality~(\ref{E:ute}) follows from~(\ref{E:shub}), the assumption $Hu=0$,  and
\[
(\psi u, \,H (\psi u)) \ = \ (\psi u, \, D^*D (\psi u)) \ + \ (V \psi u,\psi u) \ = \  (D(\psi u),D(\psi u)) \ + \ (V\psi u, \psi u),
\]
where in the second equality we used integration by parts; see Lemma 8.8 in~\cite{B-M-S}. Here, the two leftmost symbols $(\cdot,\cdot)$ denote the duality between $W_{\comp}^{1,2}(E)$ and $W^{-1,2}_{\loc}(E)$, while the remaining ones stand for $L^2$-inner products.
\end{proof}

The key ingredient in the proof of Theorem~\ref{T:main-1} is the Agmon-type estimate given in the next lemma, whose proof, inspired by an idea of~\cite{Nen}, is based on the technique developed in~\cite{Col-Tr} for magnetic Laplacians on an open
set with compact boundary in $\mathbb{R}^{n}$.

\begin{lem}\label{L:Hor}
Let $\lambda\in\mathbb{R}$ and let $v\in L^2(E)$ be a weak solution of $(H-\lambda)v=0$.
Assume that that there exists a constant $c_1>0$ such that, for all $u \in W_{\comp}^{1,2}(E)$,
\begin{equation}\label{E:bou}
(u,\, (H-\lambda) u )  \geq \lambda_0^2\int_{M}\max \left(\dfrac{1}{r(x)^2},1
\right)|u(x)|^2\,d\mu(x) +  c_1 \|u\|^2,
\end{equation}
where $r(x)$ is as in~(\ref{E:dist-boundary}), $\lambda_0$ is as in~(\ref{E:equalst}), the symbol $(\cdot,\cdot)$ on the left-hand side denotes the duality between $W_{\comp}^{1,2}(E)$ and $W^{-1,2}_{\loc}(E)$, and $|\cdot|$ is the norm in the fiber $E_{x}$.

Then, the following equality holds: $v=0$.
\end{lem}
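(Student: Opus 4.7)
The plan is to apply the energy identity of~\refl{shub} to $u=v$ for a Lipschitz, compactly supported cutoff $\psi$, combine it with~\refe{bou} tested against $\psi v$, and then saturate the Hardy-type structure of~\refe{bou} by choosing a power cutoff $\psi\sim r^{\beta}$ with $\beta\in(0,1)$.

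By~\refl{domain-w-2-2}, $v\in W^{2,2}_{\loc}(E)$. For any Lipschitz, compactly supported real-valued $\psi$ on $M$, \refl{shub} applied to $u=v$ together with $Hv=\lambda v$ produces, exactly as in the proof of~\refc{nen},
\[
 ((H-\lambda)\psi v,\psi v) \;=\; \|\widehat{D}(d\psi)v\|^{2}.
\]
Since $\psi v\in W^{1,2}_{\comp}(E)$, plugging into~\refe{bou} and using the symbol bound $|\widehat{D}(\xi)|\le\lambda_{0}|\xi|$ from (A0) yields the fundamental inequality
\begin{equation*}
 \int_{M}|d\psi|^{2}|v|^{2}\,d\mu \;\geq\; \int_{M}\max\!\bigl(r^{-2},1\bigr)|\psi v|^{2}\,d\mu \;+\; (c_{1}/\lambda_{0}^{2})\|\psi v\|^{2}.
\end{equation*}

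Fix $\beta\in(0,1)$ and set $\chi_{\beta}(x):=\min(r(x)^{\beta},1)$. It satisfies $|d\chi_{\beta}|^{2}=\beta^{2}\chi_{\beta}^{2}/r^{2}$ on $\{r\le 1\}$ and vanishes for $r\ge 1$, so testing the fundamental inequality against $\chi_{\beta}$ cancels the dominant $r^{-2}$-integral and leaves a strict Hardy gain $1-\beta^{2}$. To make $\chi_{\beta}$ into an admissible cutoff I multiply it by a Cauchy-boundary cutoff $\phi_{\delta}$ (equal to $0$ for $r\le\delta$, equal to $1$ for $r\ge 2\delta$, with a logarithmic transition on $[\delta,2\delta]$ so that $|r\phi_{\delta}'|$ is bounded uniformly in $\delta$) and an infinity cutoff $\eta_{R}$ supported in $\{d_{g^{TM}}(x_{0},\cdot)\le 2R\}$ with $|d\eta_{R}|\le 2/R$. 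Under (A1), the completion $(\hat{M},\hat{g})$ is a complete Riemannian manifold, so by Hopf--Rinow closed metric balls are compact; hence $\psi_{\delta,R}:=\phi_{\delta}\chi_{\beta}\eta_{R}$ is Lipschitz and compactly supported in $M$. Substituting into the fundamental inequality, expanding $|d\psi_{\delta,R}|^{2}$ via Young's inequality, and using the pointwise identity for $|d\chi_{\beta}|^{2}$, the main integral $\int\chi_{\beta}^{2}/r^{2}|v|^{2}\,\mathbf 1_{\{r\le 1\}}\,d\mu$ appears on the left with coefficient $(1+\varepsilon)\beta^{2}$ and on the right with coefficient $1$, up to residual terms of order $O(R^{-2})\|v\|^{2}$ and a fixed-constant multiple of the main integral coming from the logarithmic $\phi_{\delta}$. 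Letting $R\to\infty$ and then $\delta\downarrow 0$ by monotone convergence yields
\[
 \bigl[(1+\varepsilon)\beta^{2}-1\bigr]\int_{\{r\le 1\}}\chi_{\beta}^{2}/r^{2}\,|v|^{2}\,d\mu \;\geq\; \int_{\{r\ge 1\}}|v|^{2}\,d\mu \;+\; (c_{1}/\lambda_{0}^{2})\|\chi_{\beta}v\|^{2}.
\]
With $\varepsilon>0$ chosen so small that $(1+\varepsilon)\beta^{2}<1$, the left-hand side is $\le 0$ while the right-hand side is $\ge 0$; both must therefore vanish. In particular $\|\chi_{\beta}v\|=0$, and since $\chi_{\beta}>0$ throughout $M$, this gives $v\equiv 0$.

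The principal obstacle is preserving the sharp Hardy ratio $|d\chi_{\beta}|^{2}r^{2}/\chi_{\beta}^{2}=\beta^{2}$ through the auxiliary cutoff $\phi_{\delta}$ required to secure compact support of $\psi v$ in $M$. A linear transition profile for $\phi_{\delta}$ would produce a cross-term $\phi_{\delta}'\chi_{\beta}$ of unbounded ratio and overwhelm the coefficient $\beta^{2}$, whereas a logarithmic profile keeps $|r\phi_{\delta}'|$ bounded uniformly in $\delta$ and lets the cross-term be absorbed into the $\varepsilon$-perturbation. A secondary point is legitimising the $\delta\downarrow 0$ limit without a priori integrability of $\chi_{\beta}^{2}|v|^{2}/r^{2}$ near $\partial_{C}M$; this is handled at every finite $\delta>0$ by rearranging the inequality so that the main integral is controlled by quantities vanishing as $\delta\downarrow 0$, whereupon monotone convergence secures the limit.
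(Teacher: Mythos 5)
Your overall frame --- the identity of \refc{nen} applied to $H-\lambda$ and a Lipschitz compactly supported cutoff, tested against \refe{bou}, with an outer cutoff whose support is compact by (A1) and Hopf--Rinow --- is exactly the paper's. The genuine divergence, and the gap, lies in the inner cutoff near $\partial_{C}M$. You take $\chi_{\beta}=\min(r^{\beta},1)$ with $\beta<1$ so as to win the strict Hardy gain $1-\beta^{2}$ on the weighted integral $\int_{\{r\le 1\}}r^{2\beta-2}|v|^{2}\,d\mu$. But that integral is not known to be finite a priori, and your endgame (subtracting it from both sides, then letting $\delta\downarrow0$ ``by monotone convergence'') presupposes either its finiteness or that the error produced by the boundary cutoff $\phi_{\delta}$ vanishes; neither is established. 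Concretely: with a logarithmic transition on $[\delta,2\delta]$ one has $|d\phi_{\delta}|^{2}\le (r\log 2)^{-2}$ there, so the cross term $\int\chi_{\beta}^{2}|d\phi_{\delta}|^{2}|v|^{2}\,d\mu$ is a \emph{fixed} (not small, not vanishing) multiple of $\int_{\{\delta\le r\le2\delta\}}r^{2\beta-2}|v|^{2}\,d\mu$, i.e.\ of a slice of the very integral whose finiteness is in question; it tends to $0$ as $\delta\downarrow 0$ only if that integral is already known to be finite. Nor can it be absorbed into the Hardy gain, because on the transition annulus the retained positive term carries the factor $\phi_{\delta}^{2}$, which is small exactly where the cross term lives. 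Stretching the logarithm over $[\delta,\sqrt{\delta}]$ makes its coefficient small but leads to a recursive inequality for $I(\delta)=\int_{\{\delta\le r\le1\}}r^{2\beta-2}|v|^{2}\,d\mu$ that is consistent with $I(\delta)\to\infty$. So the sentence ``the main integral is controlled by quantities vanishing as $\delta\downarrow0$'' is precisely the missing argument, not a routine verification.

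The paper closes this by \emph{not} seeking a Hardy gain: it takes the profile $F(s)=s$ on $[\rho,1]$ (effectively $\beta=1$), truncated linearly to $0$ on $[\varepsilon,\rho]$ with slope $\rho/(\rho-\varepsilon)\to1$. Then the gradient side is bounded by $\lambda_{0}^{2}\bigl(\rho/(\rho-\varepsilon)+\alpha\bigr)^{2}\|v\|^{2}$ --- only the plain $L^{2}$ norm of $v$; no weighted integral near $\partial_{C}M$ ever appears --- while on the other side $(f_{\varepsilon}p_{\alpha})^{2}\max(r^{-2},1)\equiv1$ on $\{\rho\le r\le R\}\cap\{d_{g^{TM}}(x_{0},\cdot)\le1/\alpha\}$, so that term also converges to $\lambda_{0}^{2}\|v\|^{2}$. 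The two cancel in the limit, and the conclusion comes entirely from the leftover $c_{1}\|f_{\varepsilon}p_{\alpha}v\|^{2}\le0$. If you want to salvage your version, the cleanest fix is to abandon $\beta<1$ and use this exact-cancellation mechanism, which is what the $c_{1}\|u\|^{2}$ term in \refe{bou} is there for.
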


\begin{proof}
Let $\rho$ and $R$ be numbers satisfying
$0< \rho < 1/2$ and  $ 1 < R < +\infty$.
For any $\varepsilon >0$, we define the  function $f_{\varepsilon}\colon M \rightarrow \mathbb{R}$
by $f_{\varepsilon}(x)=F_{\varepsilon}(r(x))$, where $r(x)$ is as in~(\ref{E:dist-boundary}) and $F_{\varepsilon}\colon[0,\infty)\to \mathbb{R}$ is the
continuous piecewise affine
function defined by
\[
F_{\varepsilon}(s)= \left\{
\begin{array}{l}
0  {\rm ~ for~}  s\leq \varepsilon   \\
\rho (s-\varepsilon)/(\rho  - \varepsilon  )   {\rm ~ for~} \varepsilon \leq s \leq  \rho  \\
s    {\rm ~ for~} \rho  \leq s \leq  1  \\
1   {\rm ~ for~} 1 \leq s \leq R  \\
 R+1 -s   {\rm ~ for~} R \leq s \leq R+1 \\
0   {\rm ~ for~ }  s \geq R+1.
\end{array}
\right.
\]

Let us fix $x_0\in M$. For any $\alpha >0$, we define the function $p_{\alpha}\colon M \to\mathbb{R}$
by
\[
p_{\alpha}(x)=P_{\alpha}(d_{g^{TM}}(x_0,x)),
\]
where  $P_{\alpha}\colon [0,\infty) \to \mathbb{R}$ is the
continuous piecewise affine function defined by
\[ P_{\alpha}(s)= \left\{
\begin{array}{l}
1  {\rm ~ for~}  s\leq 1/{\alpha}   \\
-{\alpha}  s + 2  {\rm ~ for~} 1/{\alpha}  \leq s \leq  2/{\alpha}  \\
0   {\rm ~ for~ }  s \geq 2/{\alpha}.
\end{array}
\right.
\]
Since $\widehat{d}_{g^{TM}}(x_0,x)\leq d_{g^{TM}}(x_0,x)$, it follows that the support of $f_{\varepsilon}p_{\alpha}$ is contained in the set $B_{\alpha}:=\{x\in M\colon \widehat{d}_{g^{TM}}(x_0,x)\leq 2/\alpha\}$. By assumption (A1) we know that $\widehat{M}$ is a geodesically complete Riemannian manifold. Hence, by Hopf--Rinow Theorem the set $B_{\alpha}$ is compact. Therefore, the support of $f_{\varepsilon}p_{\alpha}$ is compact. Additionally, note that
$f_{\varepsilon}p_{\alpha}$ is a $\beta$-Lipschitz function (with respect to the distance corresponding to the metric $g^{TM}$) with $\beta=\frac{{\rho}}{{\rho-\varepsilon}}+\alpha$.

Since $v\in L^2(E)$ and $(H-\lambda)v=0$, we have $v\in\Dom(\hmax)\subset W^{2,2}_{\loc}(E)\subset W^{1,2}_{\loc}(E)$, where the first inclusion follows by Lemma~\ref{L:domain-w-2-2}. Since $f_{\varepsilon}p_{\alpha}$ is a Lipschitz compactly supported function, we get $f_{\varepsilon}p_{\alpha} v\in W^{1,2}_{\comp}(E)$ and, hence, $((H-\lambda)(f_{\varepsilon}p_{\alpha} v))\in W^{-1,2}_{\comp}(E)$.

Using~(\ref{E:equalst}) we have
\begin{equation}\label{E:est-remaining}
\|\widehat{D}(d(f_{\varepsilon}p_{\alpha}))v\|^2\leq \lambda_0^2\int_{M}|d(f_{\varepsilon}p_{\alpha})(x)|^2|v(x)|^2\,d\mu(x),
\end{equation}
where $|d(f_{\varepsilon}p_{\alpha})(x)|$ is the norm of $d(f_{\varepsilon}p_{\alpha})(x)\in T_{x}^*M$ induced by $g^{TM}$.

By Corollary~\ref{C:nen} with $H-\lambda$ in place of $H$ and the inequality~(\ref{E:est-remaining}), we get
\begin{equation}\label{E:equ-rhs}
(f_{\varepsilon}p_{\alpha} v,\, (H-\lambda) (f_{\varepsilon}p_{\alpha} v)) \leq \lambda_0^2\left(\frac{{\rho}}{{\rho-\varepsilon}}+\alpha\right)^2\|v\|^2.
\end{equation}

On the other hand, using the definitions of $f_{\varepsilon}$ and $p_{\alpha}$ and the assumption~(\ref{E:bou}) we have
\begin{equation} \label{E:equ-lhs}
(f_{\varepsilon}p_{\alpha} v,\,(H-\lambda) (f_{\varepsilon}p_{\alpha}  v))
\geq \lambda_0^2\int_{S_{\rho,R,\alpha}}|v(x)|^2\,d\mu(x)+c_1 \| f_{\varepsilon}p_{\alpha} v
\|^2,
\end{equation}
where
\[
S_{\rho,R,\alpha}:=\{x\in M\colon \rho\leq r(x) \leq R \textrm{ and } d_{g^{TM}}(x_0,x) \leq 1/\alpha\}.
\]
In~(\ref{E:equ-lhs}) and (\ref{E:equ-rhs}), the symbol $(\cdot,\cdot)$ stands for the duality between $W_{\comp}^{1,2}(E)$ and $W^{-1,2}_{\loc}(E)$. We now combine (\ref{E:equ-lhs}) and (\ref{E:equ-rhs}) to get
\[
\lambda_0^2\int_{S_{\rho,R,\alpha}}|v(x)|^2\,d\mu(x) \ + \ c_1 \| f_{\varepsilon}p_{\alpha} v\|^2\leq \lambda_0^2\left(\frac{{\rho}}{{\rho-\varepsilon}}+\alpha\right)^2\|v\|^2.
\]
We fix $\rho$, $R$, and $\varepsilon$, and let $\alpha\to 0+$. After that we let $\varepsilon \to 0+$. The last step is to do $\rho \to 0+ $ and $R \to +\infty$.
As a result, we get $v=0$.
\end{proof}

\noindent\textbf{End of the proof of Theorem~\ref{T:main-1}.} Using integration by parts (see Lemma 8.8 in~\cite{B-M-S}),  we have
\[
(u,\, Hu) \ = \ (u,D^*Du) \ + \ (Vu,u) \ = \ (Du,Du) \ + \ (Vu,u) \ \geq \ (Vu,u), \qquad\textrm{ for all }u\in W_{\comp}^{1,2}(E),
\]
where the two leftmost symbols $(\cdot,\cdot)$ denote the duality between $W_{\comp}^{1,2}(E)$ and $W^{-1,2}_{\loc}(E)$, while the remaining ones stand for $L^2$-inner products.
Hence, by assumption~(\ref{E:potential-minorant}) we get:
\begin{align}\label{E:bou-new}
(u,\, (H-\lambda) u) &\geq \lambda_0^2\int_{M}\dfrac{1}{r(x)^2}|u(x)|^2\,d\mu(x)-(\lambda+C)\|u\|^2\nonumber\\
&\geq  \lambda_0^2\int_{M}\max \left(\dfrac{1}{r(x)^2},1
\right)|u(x)|^2\,d\mu(x) -(\lambda+C+1)\|u\|^2.
\end{align}
Choosing, for instance, $\lambda=-C-2$ in~(\ref{E:bou-new}) we get the
inequality~(\ref{E:bou}) with $c_1=1$.

Thus, $\hmin-\lambda$ with $\lambda=-C-2$ is a symmetric operator satisfying
$(u,\, (\hmin-\lambda) u)\geq \|u\|^2$, for all $u\in\ecomp$. In this case, it is known (see Theorem X.26 in~\cite{rs}) that the essential self-adjointness of $\hmin-\lambda$ is equivalent to the following statement: if $v\in L^2(E)$ satisfies $(H-\lambda)v=0$, then $v=0$. Thus, by Lemma~\ref{L:Hor}, the operator $(\hmin-\lambda)$ is essentially self-adjoint. Hence, $\hmin$ is essentially self-adjoint. $\hfill\square$

\end{document}